\newtheorem{theorem}{Theorem}
\newtheorem{lemma}{Lemma}
\newtheorem{proposition}{Proposition}
\newtheorem{corollary}{Corollary}
\newcommand{\R}{\mathbb{R}}
\newcommand{\eps}{\epsilon}
\newcommand{\EE}[1]{\mathbb{E}\left[{#1}\right]}
\newcommand{\EEst}[2]{\mathbb{E}\left[{#1}\  \middle| \ {#2}\right]}
\newcommand{\Ep}[2]{\mathbb{E}_{{#1}}\left[{#2}\right]}
\newcommand{\PP}[1]{\mathbb{P}\left\{{#1}\right\}}
\newcommand{\PPst}[2]{\mathbb{P}\left\{{#1}\  \middle| \ {#2}\right\}}
\newcommand{\Ppst}[3]{\mathbb{P}_{{#1}}\left\{{#2}\  \middle| \ {#3}\right\}}
\newcommand{\Pp}[2]{\mathbb{P}_{{#1}}\left\{{#2}\right\}}
\newcommand{\Ch}{\widehat{C}}
\newcommand{\Xcal}{\mathcal{X}}
\newcommand{\Xset}{\mathfrak{X}}
\newcommand{\Bcal}{\mathcal{B}}
\newcommand{\Ccal}{\mathcal{C}}
\newcommand{\One}[1]{{\mathbbm{1}}\left\{{#1}\right\}}
\newcommand{\leb}{\textnormal{leb}}
\renewcommand{\emptyset}{\varnothing}
\newcommand{\Lcal}{\mathcal{L}}
\newcommand{\muh}{\widehat{\mu}}
\newcommand{\qh}{\widehat{q}}
\newcommand{\iidsim}{\stackrel{\textnormal{iid}}{\sim}}
\renewcommand{\O}[1]{\mathcal{O}({#1})}
\newcommand{\vc}{\textnormal{VC}}
\newcommand{\vcae}{\vc_{\textnormal{a.e.}}}
\newcommand*\samethanks[1][\value{footnote}]{\footnotemark[#1]}
\title{The limits of distribution-free conditional predictive inference}
\author{Rina Foygel Barber\thanks{Department of Statistics, University of Chicago} , 
Emmanuel J.~Cand{\`e}s\thanks{Departments of Statistics and Mathematics, Stanford University} ,
 \\ Aaditya Ramdas\thanks{Department of Statistics and Data Science, Carnegie Mellon University} \thanks{Machine Learning Department, Carnegie Mellon University} , 
 Ryan J.~Tibshirani\samethanks[3] \samethanks }
\date{\today}
\begin{document}
\maketitle

\begin{abstract}
We consider the problem of distribution-free predictive inference, with the goal of producing predictive coverage guarantees
that hold conditionally rather than marginally. Existing methods such as conformal prediction offer marginal coverage
guarantees, where predictive coverage holds on average over all possible test points, but this is not sufficient for many practical applications
where we would like to know that our predictions are valid for a given individual, not merely on average over a population.
On the other hand, exact conditional inference guarantees are known to be impossible
without imposing assumptions on the underlying distribution. In this work we aim to explore the space in between these two,
and examine what types of relaxations of the conditional coverage property would alleviate some of the practical concerns with marginal coverage guarantees
while still being possible to achieve in a distribution-free setting. 
\end{abstract}

\section{Introduction}

Consider a training data set $(X_1,Y_1),\dots,(X_n,Y_n)$, and a test point $(X_{n+1},Y_{n+1})$, with the training and test data all drawn i.i.d.~from the same distribution.
Here each $X_i\in\R^d$ is a feature vector, while $Y_i\in\R$ is a response variable.
The problem of {\em predictive inference} is the following: if we observe the $n$ training data points, and are given the feature vector
$X_{n+1}$ for a new test data point, we would like construct a prediction interval for $Y_{n+1}$---that is, a subset of $\R$ that we believe is likely
to contain the test point's true response value $Y_{n+1}$. 

As a motivating example, suppose that each data point $i$ corresponds to a patient,
with $X_i$ encoding relevant covariates (age, family history, current symptoms, etc.), while the response $Y_i$ measures a quantitative outcome 
(e.g., reduction in blood pressure after treatment with a drug). When a new patient arrives at the doctor's office with covariate values $X_{n+1}$, the doctor would like to be
able to predict their eventual outcome $Y_{n+1}$ with a range, making a statement along the lines of: ``Based on your age, family history, and current symptoms,
you can expect your blood pressure to go down by 10--15mmHg''. In this paper, we will study the problem of making accurate predictive statements of this sort.

To study such questions, throughout this paper we will write
$\Ch_n(x)\subseteq\R$ to denote the prediction interval\footnote{Note
  that the set $\Ch_n(x)\subseteq\R$ is not required to be an
  interval---it may consist of a disjoint union of multiple
  intervals. For simplicity we still refer to the \smash{$\Ch_n(x)$}'s
  as ``prediction intervals''.} for $Y_{n+1}$ given a feature vector
$X_{n+1}=x$. This interval is a function of both the test point $x$
and the training data $(X_1,Y_1),\dots,(X_n,Y_n)$. We will write
\smash{$\Ch_n$} (without specifying a test point $x$) to refer to the
algorithm that maps the training data $(X_1,Y_1),\dots,(X_n,Y_n)$ to
the resulting prediction intervals $\Ch_n(x)$ indexed by $x\in\R^d$.
(For convenience in writing our results, we assume that the $X_i$'s
lie in $\R^d$, although our results hold more generally for any
probability space.)
 
For the algorithm \smash{$\Ch_n$} to be useful, we would like to be assured that the resulting prediction interval is indeed likely to contain the true response value,
i.e., that $Y_{n+1}\in\Ch_n(X_{n+1})$ with fairly high probability. When this event succeeds, we say that the predictive interval \smash{$\Ch_n(X_{n+1})$} {\em covers} the true 
response value $Y_{n+1}$. Defining the coverage probability is not a trivial question---do we require that coverage holds with high probability on average over the test feature
vector $X_{n+1}$, pointwise at any value $X_{n+1}=x$, or something in between? In order to be robust to distributional assumptions, we would also like to ensure that
our algorithm \smash{$\Ch_n$} has good coverage properties without making any assumptions about the underlying distribution $P$---a ``distribution-free'' guarantee.

To formalize these ideas, we will begin with a few definitions. Throughout, $P$ will denote a joint distribution on $(X,Y)\in\R^d\times\R$, and we will write $P_X$ to denote
the induced marginal on $X$, and $P_{Y|X}$ for the conditional distribution of $Y|X$.
We say that \smash{$\Ch_n$} satisfies {\em distribution-free marginal coverage} at the level $1-\alpha$, denoted by $(1-\alpha)$-MC, if\footnote{In these definitions, and throughout the remainder of the paper, all probabilities are taken with respect to training data $(X_1,Y_1),\dots,(X_n,Y_n)$ and test point $(X_{n+1},Y_{n+1})$ all drawn i.i.d.~from $P$, unless specified otherwise. }
\begin{equation}\label{eqn:marginal_cov}
\PP{Y_{n+1}\in \Ch_n(X_{n+1})} \geq  1-\alpha\textnormal{ for all distributions $P$}.
\end{equation}
In other words, the probability that \smash{$\Ch_n$} covers the true test value $Y_{n+1}$ is at least $1-\alpha$, on average over a random draw of the training and test data from {\em any} distribution $P$.
We say that \smash{$\Ch_n$} satisfies {\em distribution-free conditional coverage} at the  level $1-\alpha$, denoted by $(1-\alpha)$-CC, if
\begin{equation}\label{eqn:conditional_cov}
\PPst{Y_{n+1}\in \Ch_n(X_{n+1})}{X_{n+1}=x} \geq  1-\alpha\textnormal{ for all $P$ and almost all $x$},
\end{equation}
where, fixing the distribution $P$, we write ``almost all $x$'' to mean that the set of points $x\in\R^d$ where the bound fails to hold must have measure zero under $P_X$. This means that the probability that \smash{$\Ch_n$} covers, at a {\em fixed} test point $X_{n+1}=x$, is at least $1-\alpha$.\footnote{\citet{vovk2012conditional} also considers a notion of conditional coverage,
where the guarantee is required to hold after conditioning on the training data $(X_1,Y_1),\dots,(X_n,Y_n)$ but without conditioning on the test point $X_{n+1}$,
and thus is very different from the type of conditioning that we consider here.}

Now, how should we interpret the difference between marginal and conditional coverage? With $\alpha=0.05$,
we expect that the doctor's statement (``...you can expect your blood pressure to go down by 10--15mmHg'') should hold with 95\% probability.
For marginal coverage, the probability is taken over both $X_{n+1}$ and $Y_{n+1}$, while for conditional coverage, $X_{n+1}$ is fixed and the probability
is taken over $Y_{n+1}$ only (and over all the training data in both situations). This means that for marginal coverage, the doctor's statements have a 95\% chance
of being accurate {\em on average} over all possible patients that might arrive at the clinic (marginalizing over $X_{n+1}$), but might for example have 0\% chance of being
accurate for patients under the age of 25, as long as this is averaged out by a higher-than-95\% chance of coverage for patients older than 25. 
The stronger definition of conditional coverage, on the other hand, removes this possibility, and requires that
whatever statement the doctor makes (different for each patient) has a 95\% chance of being true for every individual patient, regardless of the patient's age, family history, etc.

For practical purposes, then, marginal coverage does not seem to be sufficient---each patient would reasonably hope that the information they receive
is accurate for their specific circumstances, and is not comforted by knowing that the inaccurate information they might be receiving will be balanced out by some other patient's highly precise
prediction. On the other hand, the problem of conditional inference is statistically very challenging, and is known to be incompatible with the distribution-free setting (we will 
discuss this in more detail later on). Our goal in this paper is therefore to explore the middle ground between marginal and conditional inference, while working in the
distribution-free setting in order to be robust to violations of any modeling assumptions.

\subsection{Summary of contributions}\label{sec:summary}
As mentioned above, it is known to be impossible for any finite-length prediction interval
to satisfy distribution-free conditional coverage in the sense of~\eqref{eqn:conditional_cov}---this
is because, without assuming smoothness of the underlying distribution $P$, we cannot exclude the possibility that there is 
some sort of discontinuity at $X=x$ that leads to a failure of coverage. (Background on this type of impossibility result is described
more formally in Section~\ref{sec:background_CC_impossible}.)

This impossibility motivates us to consider an approximate version of the conditional coverage property.
We will say that \smash{$\Ch_n$} satisfies {\em distribution-free approximate conditional coverage} at level $1-\alpha$ and tolerance $\delta>0$, denoted by $(1-\alpha,\delta)$-CC, if
\begin{multline}\label{eqn:approx_conditional_cov}
\PPst{Y_{n+1}\in \Ch_n(X_{n+1})}{X_{n+1}\in\Xcal} \geq  1-\alpha\textnormal{ for all distributions $P$}\\\textnormal{ and all $\Xcal\subseteq\R^d$ with $P_X(\Xcal)\geq \delta$.}
\end{multline}
For example, at $\alpha = 0.05$ and $\delta=0.1$,
the coverage probability has to be at least 95\% for any subgroup of patients that makes up at least 10\% of the overall population.
If $\delta>0$ is fairly small, then this approximate conditional coverage property
 is quite a bit stronger than marginal coverage, and may be sufficient for many applications.
 
 However, we find that it is inherently impossible to
 find non-trivial algorithms that achieve even this relaxed notion of conditional coverage. 
 Specifically, we compare against a trivial solution: 
we show with a simple argument that any method $\Ch_n$ that satisfies $(1-\alpha\delta)$-MC, will also
satisfy $(1-\alpha,\delta)$-CC. In this sense, we can trivially achieve approximate conditional coverage by way of marginal
coverage, but this solution is not satisfactory since, for small $\delta$, a $(1-\alpha\delta)$-MC prediction interval
will be extremely wide. 
However,
the main result of this paper, Theorem~\ref{thm:main} (see Section~\ref{sec:approximate_CC}),
proves that any $(1-\alpha,\delta)$-CC method is essentially no better than this kind of trivial construction
(in the sense of the expected length of the resulting intervals).

Perhaps, then, the definition~\eqref{eqn:approx_conditional_cov} of approximate conditional coverage may be stronger
than needed in practical applications. In a medical setting, for instance, a patient would typically want to know that coverage is accurate
on average over {\em a subgroup of patients similar to the individual}, and would not be concerned about arbitrary subgroups
consisting of highly dissimilar patients. This motivates us to consider alternatives to 
the approximate conditional coverage property~\eqref{eqn:approx_conditional_cov}---in Section~\ref{sec:restricted_CC},
we modify~\eqref{eqn:approx_conditional_cov} to consider only a restricted class of sets $\Xcal$, for instance,
only sets consisting of balls under some metric (to represent patients similar to the individual of interest, in our example).
We construct an example of an algorithm that satisfies this type of property---a modification of the split conformal method---that we
analyze in Theorem~\ref{thm:Xset}. We also
 establish lower (Theorem~\ref{thm:Xset_VC_lowerbd}) and upper (Theorem~\ref{thm:Xset_VC_upperbd}) bounds on the efficiency of any predictive method
 satisfying this type of property,
as a function of the complexity (VC dimension) of the class of sets over which coverage is required to hold.

\subsection{Notation} Before proceeding, we establish some notation and terminology that will be used
throughout the paper. All sets and functions are  implicitly assumed to be measurable
(e.g., ``for all $\Xcal\subseteq\R^d$'' in~\eqref{eqn:approx_conditional_cov} should be interpreted
to mean all measurable subsets of $\R^d$). The function $\leb()$ denotes Lebesgue measure on $\R$ or on $\R^d$.
Prediction intervals are allowed to be either fixed or randomized. Specifically, a non-data-dependent prediction interval $C = C(x)$
may either be fixed (i.e., a function mapping points $x\in\R^d$ to subsets $C(x)\subseteq\R$)
or random (i.e., a function mapping points $x\in\R^d$ to a random variable $C(x)$ taking values in the set of subsets of $\R$).
Analogously, for a data-dependent prediction interval $\Ch_n = \Ch_n(x)$, 
fixing the training data $(X_1,Y_1),\dots,(X_n,Y_n)$ and the vector $x\in\R^d$, this interval may be either a fixed or random subset of $\R$.

\section{Background}
In this section, we give background on the split conformal prediction method, which achieves distribution-free marginal
coverage, and review results in the literature establishing that distribution-free conditional coverage is not possible.

\subsection{Split conformal prediction}\label{sec:background_split_conformal}

The {\em split conformal prediction} algorithm, introduced in \citet{papadopoulos2002inductive,vovk2005algorithmic} (under the name ``inductive conformal prediction'') and studied further by \citet{papadopoulos2008inductive,vovk2012conditional,lei2018distribution}, is a well known method that achieves distribution-free marginal coverage guarantees. This method makes no assumptions at all on the distribution of the data aside from requiring that the training data and the test point are exchangeable. (Of course, assuming that the training and test data are i.i.d.~is simply a special case of the exchangeability assumption.) 

The split conformal prediction method begins by partitioning the sample size $n$ into two portions, $n = n_0+n_1$, e.g., split in half. We will use the first $n_0$ many training points
to fit an estimated regression function $\muh_{n_0}(x)$, and the remaining $n_1=n-n_0$ many training points to determine the width of the prediction interval around $\muh_{n_0}(x)$. The estimated model $\muh_{n_0}$ can be fitted from $(X_1,Y_1),\dots,(X_{n_0},Y_{n_0})$ using
any algorithm---for example, we might fit a linear model, $\muh_{n_0}(x) = x^\top\widehat\beta$ where $\widehat\beta\in\R^d$ is fitted
on the data points $(X_1,Y_1),\dots,(X_{n_0},Y_{n_0})$ using least squares regression or any other regression method.

Next, fix a desired predictive coverage level $1-\alpha$, for instance 95\%. We then compute residuals
\[R_i = \big| Y_i - \muh_{n_0}(X_i)\big|\textnormal{ for $i=n_0+1,\dots,n$},\]
and define\footnote{Formally, when we write ``the $k$-th smallest value of the list ...'' for a list that has $m$ elements, this will denote $+\infty$ in the case that $k>m$.}
\[\qh_{n_1} = \textnormal{ the $\left\lceil (1-\alpha)(n_1 + 1)\right\rceil$-smallest value of the list $R_{n_0+1},\dots,R_n$.}\]
The predictive interval is then defined as
\begin{equation}\label{eqn:split_conformal_define}\Ch_n(x) = \big[\muh_{n_0}(x) - \qh_{n_1}, \muh_{n_0}(x) + \qh_{n_1}\big].\end{equation}
This method can also be generalized to include a local variance/scale estimate, or to allow for an asymmetric
construction treating the right and left tails of the residuals separately.

The split conformal algorithm is a variant of {\em conformal prediction}, which has a rich literature dating back many years (see, e.g., \citet{vovk2005algorithmic,shafer2008tutorial}
for background). Conformal prediction similarly relies on the exchangeability of the training and test data, but rather than splitting the training data to separate the tasks of model fitting and calibrating the quantiles, conformal prediction uses the full training sample for both tasks, thus paying a higher computational cost. Here, for simplicity, we do not describe conformal prediction, but focus on the split conformal algorithm, which we  generalize in our own proposed methods later on.

Using the assumption that the data points are i.i.d., the proof that the split conformal prediction method satisfies $(1-\alpha)$-MC
is very intuitive. For completeness we state this known result
here.
\begin{theorem}[{\citet[Proposition 1]{papadopoulos2002inductive}}]\label{thm:split_conformal}
The split conformal prediction method defined in~\eqref{eqn:split_conformal_define} satisfies the $(1-\alpha)$-MC property~\eqref{eqn:marginal_cov}.
\end{theorem}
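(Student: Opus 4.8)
The plan is the standard exchangeability argument, carried out conditionally on the model-fitting portion of the sample. First I would condition on the first $n_0$ data points $(X_1,Y_1),\dots,(X_{n_0},Y_{n_0})$, which renders the fitted regression function $\muh_{n_0}$ deterministic. Conditionally on these, the calibration points $(X_{n_0+1},Y_{n_0+1}),\dots,(X_n,Y_n)$ and the test point $(X_{n+1},Y_{n+1})$ are i.i.d.\ draws from $P$, hence exchangeable; applying the fixed map $(x,y)\mapsto|y-\muh_{n_0}(x)|$ coordinatewise, the residuals $R_{n_0+1},\dots,R_n,R_{n+1}$ (where $R_{n+1}:=|Y_{n+1}-\muh_{n_0}(X_{n+1})|$) are also exchangeable.

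Next I would rewrite the coverage event in terms of these residuals. By the definition~\eqref{eqn:split_conformal_define}, $Y_{n+1}\in\Ch_n(X_{n+1})$ if and only if $R_{n+1}\le\qh_{n_1}$. Set $k=\lceil(1-\alpha)(n_1+1)\rceil$. If $k>n_1$ then, by the convention in the footnote, $\qh_{n_1}=+\infty$, so $\Ch_n(X_{n+1})=\R$ and coverage holds with conditional probability one. Otherwise $\qh_{n_1}$ is the $k$-th smallest of the $n_1$ calibration residuals, and a short deterministic check shows that $R_{n+1}\le\qh_{n_1}$ is implied by the event that at most $k-1$ of the calibration residuals are strictly smaller than $R_{n+1}$, i.e., that $R_{n+1}$ ranks among the $k$ smallest of all $n_1+1$ residuals (breaking ties in the favorable direction).

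Finally I would invoke the basic rank fact for exchangeable variables. Writing, for each of the $n_1+1$ residuals, the count of strictly-smaller companions among the others, a counting argument shows that at least $k$ of the $n_1+1$ values have at most $k-1$ strictly-smaller companions; averaging this bound over the $n_1+1$ positions, which are identically distributed by exchangeability, gives that the conditional probability of the bottom-$k$ event is at least $k/(n_1+1)\ge 1-\alpha$, where the last step uses $k=\lceil(1-\alpha)(n_1+1)\rceil\ge(1-\alpha)(n_1+1)$. Combining with the trivial case above, $\PPst{Y_{n+1}\in\Ch_n(X_{n+1})}{(X_i,Y_i)_{i\le n_0}}\ge1-\alpha$ in all cases, and taking the expectation over the first $n_0$ points yields~\eqref{eqn:marginal_cov}. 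The only delicate point is the tie-breaking bookkeeping and the direction of the order-statistic comparison; beyond that, nothing is used except exchangeability of the calibration-plus-test residuals, so the i.i.d.\ hypothesis is in fact stronger than necessary.
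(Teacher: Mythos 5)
Your proof is correct and is the standard exchangeability-plus-rank argument for split conformal coverage: condition on the proper training half so that $\muh_{n_0}$ is fixed, note that the calibration and test residuals are exchangeable, and lower-bound the probability that $R_{n+1}$ ranks among the $k=\lceil(1-\alpha)(n_1+1)\rceil$ smallest. The paper itself does not reprove this theorem (it cites Papadopoulos et al., 2002), but its proof of the generalized Theorem~\ref{thm:Xset} specializes to exactly your argument when one takes $\Xcal=\R^d$; if anything, your handling of ties, via the averaged count of indices with at most $k-1$ strictly-smaller companions, is slightly more careful than the paper's passing assertion that the analogous miscoverage event has probability exactly $1-\lceil\cdot\rceil/(|I|+1)$.
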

\noindent Importantly, the above guarantee holds irrespective of the regression algorithm used to fit $\muh_{n_0}$. 
Furthermore, \citet{lei2018distribution} show that, in some settings, this distribution-free construction may result in an interval that is asymptotically no wider
than the best possible ``oracle'' interval---in other words, it is possible to provide marginal distribution-free prediction
without incurring a cost in terms of overly wide intervals.
(The intuition behind the proof of Theorem~\ref{thm:split_conformal} will be discussed in Section~\ref{sec:split_Xset} as a special case of our new results;
\citet{lei2018distribution}'s guarantee of optimal length will be discussed in more detail in Section~\ref{sec:location_family}.)

\subsection{Impossibility of distribution-free conditional coverage}\label{sec:background_CC_impossible}

While the split conformal method satisfies distribution-free marginal coverage~\eqref{eqn:marginal_cov}, as mentioned earlier, this property may not be sufficient for practical prediction tasks, as it leaves open the possibility that entire regions of test points (e.g., subgroups of patients) are receiving inaccurate predictions. To avoid this problem, we may wish to construct \smash{$\Ch_n$} to guarantee coverage conditional on $X_{n+1}$, rather than on average over $X_{n+1}$. 
Is it possible to achieve distribution-free conditional coverage~\eqref{eqn:conditional_cov}, 
while still constructing predictive intervals that are not too much larger than needed?

Unfortunately, it is well known that, if we do not place any assumptions on $P$, then estimation and inference on various functionals of $P$ are impossible to carry out; see, e.g., \citet{bahadur1956nonexistence,donoho1988one} for background.
More specifically, for the current problem of  distribution-free conditional prediction intervals,
\citet{vovk2012conditional,lei2014distribution} prove that the $(1-\alpha)$-CC property~\eqref{eqn:conditional_cov}
is impossible for any algorithm
\smash{$\Ch_n$}, unless \smash{$\Ch_n$} has the property that it produces intervals with infinite expected length under {\em any} non-discrete distribution $P$, which is not a meaningful procedure.
\begin{proposition}\label{prop:exact_CC_infinite}[Rephrased from \citet{vovk2012conditional,lei2014distribution}]
Suppose that \smash{$\Ch_n$} satisfies $(1-\alpha)$-CC~\eqref{eqn:conditional_cov}. Then for all distributions $P$, it holds that
\[\EE{\leb(\Ch_n(x))} = \infty\]
at almost all points $x$ aside from the atoms of $P_X$.
\end{proposition}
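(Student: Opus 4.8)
The plan is to argue by \emph{perturbing} $P$, exploiting that the hypothesis demands $(1-\alpha)$-CC for \emph{every} distribution. Fix $P$ and let $A$ be the (at most countable) set of atoms of $P_X$. The tempting move --- replace $P_{Y|X=x_0}$ at a single non-atom point $x_0$ by an arbitrary law $Q$ --- fails: since $P_X(\{x_0\})=0$ the modified joint law still equals $P$, so $\Ch_n(x_0)$ has the same law, but the $(1-\alpha)$-CC guarantee for the modified distribution is only required to hold for $P_X$-almost every $x$, and its exceptional $P_X$-null set may simply contain $x_0$; no contradiction arises. The remedy is to perturb $Y\mid X$ on a small \emph{neighbourhood} of positive $P_X$-mass, so that the modified law genuinely differs from $P$ but only by a controlled amount in total variation.

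Concretely, pick $\eps>0$ with $n\eps\le(1-\alpha)/2$ (possible since $\alpha<1$). Every $x\notin A$ has an open neighbourhood of $P_X$-mass less than $\eps$ --- this uses $P_X(\{x\})=0$ together with continuity of measure along shrinking balls --- so by Lindel\"of's property we may cover $\R^d\setminus A$ by countably many open sets $I_1,I_2,\dots$ with $P_X(I_k)<\eps$ for all $k$; discarding the $P_X$-null ones, assume $P_X(I_k)>0$. Fix $k$ and a level $M>0$, and let $P^{(k,M)}$ be the joint law with the same $X$-marginal $P_X$, with $Y\mid X=x$ equal to the uniform law on $[0,M]$ for $x\in I_k$ and equal to $P_{Y|X=x}$ for $x\notin I_k$.

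Now couple a size-$n$ sample from $P$ with one from $P^{(k,M)}$ by taking the covariate vectors identical and the responses identical whenever the covariate falls outside $I_k$ (coupling any internal randomization of $\Ch_n$ identically as well); on the event $\Omega_k=\{X_i\notin I_k\text{ for all }i\le n\}$, which has probability at least $1-n\eps$ under both laws by a union bound, the two samples --- hence the sets $\Ch_n(\cdot)$ --- coincide. Apply $(1-\alpha)$-CC for $P^{(k,M)}$: for $P_X$-almost every $x\in I_k$ (legitimate because $P_X(I_k)>0$), since $Y_{n+1}\mid X_{n+1}=x$ is uniform on $[0,M]$ and independent of $\Ch_n(x)$, the coverage probability is exactly $\EE{\leb(\Ch_n(x)\cap[0,M])}/M$ evaluated under $P^{(k,M)}$, and this is at least $1-\alpha$; transferring through the coupling costs at most $M\cdot\PP{\Omega_k^c}\le Mn\eps$ (using $\leb(\Ch_n(x)\cap[0,M])\le M$), so under $P$ we get $\EE{\leb(\Ch_n(x)\cap[0,M])}\ge M(1-\alpha-n\eps)\ge M(1-\alpha)/2$ for $P_X$-almost every $x\in I_k$. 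Letting $M\to\infty$ through the integers and intersecting the countably many exceptional null sets gives $\EE{\leb(\Ch_n(x))}=\infty$ for $P_X$-almost every $x\in I_k$; the union over $k$, together with the fact that the $I_k$ cover $P_X$-almost every point of $\R^d\setminus A$, finishes the proof.

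The main obstacle is exactly the interplay between the ``almost all $x$'' in the hypothesis and the ``almost all $x$'' we must deliver in the conclusion: this is what kills the one-point perturbation and forces the positive-measure construction with its coupling/total-variation bookkeeping. A secondary point to get right is the choice of a \emph{continuous} perturbing law (uniform on $[0,M]$): because $(1-\alpha)$-CC is an a.e.\ statement in $x$, this choice turns the coverage bound directly into an a.e.-in-$x$ lower bound on $\EE{\leb(\Ch_n(x))}$, whereas a point-mass perturbation combined with Fubini would only control the \emph{average} over $x\in I_k$. Everything else --- measurability of the maps involved, the union bound for $\PP{\Omega_k^c}$, and the countable-intersection steps --- is routine.
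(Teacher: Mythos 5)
The paper does not supply its own proof of Proposition~\ref{prop:exact_CC_infinite}; it is stated as a rephrasing of Vovk (2012) and Lei and Wasserman (2014), with no argument given in the text or appendix, so there is no internal proof to compare against. Your argument is correct and self-contained. The key steps all check out: the Lindel\"of cover of $\R^d\setminus A$ by countably many open sets $I_k$ with $P_X(I_k)<\eps$ (using non-atomicity and continuity of measure); the replacement of $P_{Y\mid X=x}$ by $\textnormal{Unif}[0,M]$ on $I_k$; the coupling that keeps the training sample identical off the event $\Omega_k^c$, whose probability is at most $n\eps$ under either law; the identity $\Ppst{P^{(k,M)}}{Y_{n+1}\in\Ch_n(X_{n+1})}{X_{n+1}=x}=\Ep{P^{(k,M)}}{\leb(\Ch_n(x)\cap[0,M])}/M$ for $P_X$-a.e.\ $x\in I_k$, which follows from conditional independence of $Y_{n+1}$ and the training data given $X_{n+1}=x$; the transfer to $P$ at cost $Mn\eps\le M(1-\alpha)/2$; and the countable intersections over $M$ and unions over $k$. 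Your remark about why the one-point perturbation fails is exactly the subtle point, and the positive-measure cover with total-variation bookkeeping is the right fix. It is worth noting that this perturb-then-couple device is the same one the paper uses to prove Lemma~\ref{lem:X_to_XY} and Lemma~\ref{lem:X_to_XY_Xset}: construct a nearby distribution under which the coverage hypothesis constrains the object of interest, then transfer back to $P$ by bounding the coupling cost. One small housekeeping point: you should record the standing assumption $\alpha<1$ (needed to choose $\eps>0$), which is implicit in the proposition since at $\alpha=1$ the empty interval satisfies $(1-\alpha)$-CC and the conclusion fails.
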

\noindent In other words, at almost all nonatomic points $x$, the prediction interval has infinite expected length. This means that distribution-free conditional coverage in the sense of~\eqref{eqn:conditional_cov} is impossible to attain in any meaningful sense.

\paragraph{Asymptotic conditional coverage.}
There is an extensive literature examining this problem in a setting where $P$ is assumed to satisfy some type of smoothness
condition, and conditional coverage can then be achieved asymptotically
 by letting the sample size $n$ tend to infinity and using a  vanishing bandwidth  to compute local
smoothed estimators of the conditional distribution of $Y|X$. Works in this line of the literature
include~\citet{cai2014adaptive,lei2014distribution}, among many others. 
In this present work, however, we are interested in obtaining distribution-free guarantees that hold at any finite sample size $n$, and therefore
we aim to avoid relying on assumptions such as smoothness of $P$ or on asymptotic arguments.

\section{Approximate conditional coverage}\label{sec:approximate_CC}
While the results of \citet{vovk2012conditional} and \citet{lei2014distribution} prove that distribution-free methods cannot achieve conditional predictive guarantees, in practice
it may be sufficient to obtain ``approximately conditional'' inference.
In our doctor/patient example, we
would certainly want to make sure that there is no entire subgroup of patients that are all receiving poor predictions---as in our earlier example where the predictive
intervals had poor coverage for all patients below the age of 25---but we may be willing to accept that some rare groups of patients might be receiving inaccurate information.

We therefore try to relax our requirement of conditional coverage to an approximate version---recall from Section~\ref{sec:summary}
that \smash{$\Ch_n$} satisfies {\em distribution-free approximate
  conditional coverage} at level $1-\alpha$ and tolerance $\delta>0$,
denoted by $(1-\alpha,\delta)$-CC, if \eqref{eqn:approx_conditional_cov} holds.
We can easily verify that approximate conditional coverage limits to conditional coverage by taking $\delta$ to zero:
\[\textnormal{\smash{$\Ch_n$} satisfies $(1-\alpha)$-CC} \quad \Longleftrightarrow \quad \textnormal{\smash{$\Ch_n$} satisfies $(1-\alpha,\delta)$-CC for all $\delta>0$}.\]
At the other extreme, marginal coverage is recovered by taking $\delta=1$:
\[\textnormal{\smash{$\Ch_n$} satisfies $(1-\alpha)$-MC} \quad \Longleftrightarrow \quad \textnormal{\smash{$\Ch_n$} satisfies $(1-\alpha,\delta)$-CC for $\delta=1$}.\]

While we have seen that exact conditional coverage is impossible to meaningfully attain, does this relaxation allow us to move towards a meaningful solution? 
To answer this question, it is useful to first consider a simple solution  obtained by way of a marginal coverage method.

\subsection{The inadequacy of reducing to marginal coverage}

The following lemma suggests that our approximate conditional coverage can be naively obtained via marginal coverage at a more stringent level. 

\begin{lemma}\label{lem:alphadelta_MC}
Let \smash{$\Ch_n$} be any method that attains distribution-free marginal coverage~\eqref{eqn:marginal_cov} with miscoverage rate $\alpha\delta$ in place of $\alpha$, that is, \smash{$\Ch_n$} satisfies the $(1-\alpha\delta)$-MC property. Then \smash{$\Ch_n$} also satisfies $(1-\alpha,\delta)$-CC.
\end{lemma}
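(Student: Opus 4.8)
The plan is to exploit the elementary fact that the marginal miscoverage probability dominates the joint probability of the miscoverage event intersected with $\{X_{n+1}\in\Xcal\}$, and then to divide through by $P_X(\Xcal)\geq\delta$. Concretely, I would fix an arbitrary distribution $P$ and an arbitrary set $\Xcal\subseteq\R^d$ with $P_X(\Xcal)\geq\delta$, and write $A$ for the miscoverage event $\{Y_{n+1}\notin\Ch_n(X_{n+1})\}$, understood as a measurable event in the full probability space generated by the training data, the test point, and any internal randomization of $\Ch_n$. Since $\{X_{n+1}\in\Xcal\}$ has probability $P_X(\Xcal)\geq\delta>0$, the conditional probability $\PPst{A}{X_{n+1}\in\Xcal}$ is well-defined, and trivially
\[
\PP{A}\ \geq\ \PP{A\cap\{X_{n+1}\in\Xcal\}}\ =\ P_X(\Xcal)\cdot\PPst{A}{X_{n+1}\in\Xcal}.
\]

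Next I would invoke the $(1-\alpha\delta)$-MC assumption, which says precisely that $\PP{A}\leq\alpha\delta$. Combining this with the displayed inequality and with $P_X(\Xcal)\geq\delta>0$, I rearrange to obtain
\[
\PPst{A}{X_{n+1}\in\Xcal}\ \leq\ \frac{\PP{A}}{P_X(\Xcal)}\ \leq\ \frac{\alpha\delta}{\delta}\ =\ \alpha.
\]
Passing to complements gives $\PPst{Y_{n+1}\in\Ch_n(X_{n+1})}{X_{n+1}\in\Xcal}\geq 1-\alpha$, and since $P$ and $\Xcal$ (subject to $P_X(\Xcal)\geq\delta$) were arbitrary, this is exactly the $(1-\alpha,\delta)$-CC property~\eqref{eqn:approx_conditional_cov}.

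There is no substantive obstacle here; the only points requiring a word of care are (i) that conditioning on $\{X_{n+1}\in\Xcal\}$ is legitimate, which follows from $P_X(\Xcal)\geq\delta>0$, and (ii) that the argument is insensitive to whether $\Ch_n$ is randomized, since $A$ is an event in the enlarged probability space that also carries the randomization, and the MC guarantee is stated with respect to that same space. In particular, no structural property of $\Ch_n$ (such as the split conformal construction) is used — only the numerical relationship between the two miscoverage budgets $\alpha\delta$ and $\alpha$.
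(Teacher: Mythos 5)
Your proof is correct and takes essentially the same approach as the paper: bound the marginal miscoverage probability below by the joint probability of miscoverage and $\{X_{n+1}\in\Xcal\}$, factor the latter as $P_X(\Xcal)$ times the conditional miscoverage probability, and divide by $P_X(\Xcal)\geq\delta$.
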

\begin{proof}[Proof of Lemma~\ref{lem:alphadelta_MC}]
Since \smash{$\Ch_n$} satisfies $(1-\alpha\delta)$-MC, for any distribution $P$ we have
\begin{multline*}
\alpha\delta
\geq \PP{Y_{n+1}\not\in \Ch_n(X_{n+1})}
\geq \PP{Y_{n+1}\not\in \Ch_n(X_{n+1}),X_{n+1}\in\Xcal}\\
\geq \delta\cdot \PPst{Y_{n+1}\not\in \Ch_n(X_{n+1})}{X_{n+1}\in\Xcal},
\end{multline*}
where the last step holds for any $\Xcal$ with $\PP{X_{n+1}\in\Xcal} = P_X(\Xcal)\geq \delta$. Rearranging yields the lemma.
\end{proof}

\noindent To interpret this lemma, we might apply the split conformal prediction algorithm~\eqref{eqn:split_conformal_define} at the miscoverage level $\alpha\delta$, which ensures marginal coverage at this level and, therefore, ensures $(1-\alpha,\delta)$-CC. However, we would typically choose $\delta$ to be quite small, as we would like to be able to condition on small sets $\Xcal$ (to ensure that there aren't any large subgroups of patients all receiving poor information). This means that any prediction intervals satisfying $(1-\alpha\delta)$-MC must generally be extremely wide, e.g., $99.5\%$-coverage intervals instead of $95\%$-coverage intervals when $\alpha = 0.05$ and $\delta=0.1$. Therefore, the naive solution of using marginal coverage to ensure approximate conditional coverage is not satisfactory. 

Before moving on, we extend Lemma~\ref{lem:alphadelta_MC} to generalize the naive solution given by $(1-\alpha\delta)$-MC:
\begin{lemma}\label{lem:alphadelta_MC_extend}
Let \smash{$\Ch_n$} be any method that satisfies $(1-c\alpha\delta)$-MC~\eqref{eqn:marginal_cov}, for some $c\in[0,1]$. Let $\Ch'_n$ be defined as follows: at a test point $x$, with probability $\frac{1-\alpha}{1-c\alpha}$, we define $\Ch'_n(x)=\Ch_n(x)$, or otherwise, we define $\Ch'_n(x)=\emptyset$ (the empty set), where we assume that this decision is carried out independently of $x$ and of the training data. Then $\Ch'_n$ also satisfies $(1-\alpha,\delta)$-CC.
\end{lemma}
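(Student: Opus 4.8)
The plan is to reduce to Lemma~\ref{lem:alphadelta_MC}, treating $c\alpha$ as the effective miscoverage level for $\Ch_n$, and then to account for the extra independent randomization by a one-line conditioning argument.

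First I would apply Lemma~\ref{lem:alphadelta_MC} directly. The hypothesis is that $\Ch_n$ satisfies $(1-c\alpha\delta)$-MC, i.e.\ the $(1-(c\alpha)\delta)$-MC property, so Lemma~\ref{lem:alphadelta_MC} with $c\alpha$ playing the role of $\alpha$ shows that $\Ch_n$ satisfies $(1-c\alpha,\delta)$-CC: for every distribution $P$ and every $\Xcal\subseteq\R^d$ with $P_X(\Xcal)\geq\delta$ we have $\PPst{Y_{n+1}\in\Ch_n(X_{n+1})}{X_{n+1}\in\Xcal}\geq 1-c\alpha$. (It is worth noting in passing that $c\in[0,1]$ and $\alpha<1$ give $c\alpha\in[0,1)$, so this statement is meaningful, and moreover $\frac{1-\alpha}{1-c\alpha}\in[0,1]$, so the recipe defining $\Ch'_n$ is a valid randomization.)

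Next, fix $P$ and $\Xcal$ with $P_X(\Xcal)\geq\delta>0$, and let $B$ be the event that the coin flip lands so that $\Ch'_n(X_{n+1})=\Ch_n(X_{n+1})$, so that $\PP{B}=\frac{1-\alpha}{1-c\alpha}$, while on $B^c$ we have $\Ch'_n(X_{n+1})=\emptyset$ and hence $Y_{n+1}\notin\Ch'_n(X_{n+1})$ with certainty. Decomposing over $B$ and $B^c$,
\[
\PPst{Y_{n+1}\in\Ch'_n(X_{n+1})}{X_{n+1}\in\Xcal}=\PP{B}\cdot\PPst{Y_{n+1}\in\Ch_n(X_{n+1})}{X_{n+1}\in\Xcal,\,B}.
\]
The key point is that by construction the coin flip is independent of the training data and of $(X_{n+1},Y_{n+1})$, so conditioning further on $B$ does not alter the probability of the data event; thus the right-hand side equals $\frac{1-\alpha}{1-c\alpha}\cdot\PPst{Y_{n+1}\in\Ch_n(X_{n+1})}{X_{n+1}\in\Xcal}\geq\frac{1-\alpha}{1-c\alpha}\cdot(1-c\alpha)=1-\alpha$, which is exactly the $(1-\alpha,\delta)$-CC property for $\Ch'_n$.

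There is no substantive obstacle here; the only points requiring care are (i) lining up $c\alpha$ correctly with the hypothesis of Lemma~\ref{lem:alphadelta_MC}, and (ii) justifying that independence of the randomization lets us drop the conditioning on $B$ inside the data probability. For (ii), if one prefers to read the construction as flipping a fresh coin at each $x$ rather than a single global coin, the same argument goes through after first conditioning on $X_{n+1}=x$, since the coin associated with $x$ is still independent of the training data and of $Y_{n+1}$.
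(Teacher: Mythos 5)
Your proof is correct and essentially identical to the paper's: both introduce the Bernoulli indicator for the randomization, use its independence from the data to factor out the probability $\frac{1-\alpha}{1-c\alpha}$, and then invoke Lemma~\ref{lem:alphadelta_MC} with $c\alpha$ in place of $\alpha$ to obtain the bound $1-c\alpha$ on the conditional coverage of $\Ch_n$.
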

\noindent  Proofs for this lemma and for all subsequent theoretical results are given in the Appendix.

To understand the role of the parameter $c$ in this lemma, we can consider
the two extremes---setting $c=1$, we would simply output the interval \smash{$\Ch_n(x)$} that satisfies $(1-\alpha\delta)$-MC, i.e., we return to the naive solution of Lemma~\ref{lem:alphadelta_MC}.
At the other extreme, if we set $c=0$, at any test point $X_{n+1}=x$ the resulting prediction interval would be given by $\R$ with probability $1-\alpha$, or $\emptyset$ otherwise---this
clearly satisfies $(1-\alpha,\delta)$-CC (and, in fact, $(1-\alpha)$-CC)  but is of course meaningless as it reveals no information about the data.

\subsection{Hardness of approximate conditional coverage}
We now introduce our main result, which proves that, as in the exact conditional coverage setting,
the relaxation to $(1-\alpha,\delta)$-conditional coverage is still impossible to attain meaningfully. In particular, the naive solution---obtaining $(1-\alpha,\delta)$-CC by way of marginal coverage,
as in Lemmas~\ref{lem:alphadelta_MC} and~\ref{lem:alphadelta_MC_extend}---is in some sense the best possible method, in terms of the lengths of the resulting prediction intervals. 

To quantify this, for any $P$ and any marginal coverage level $1-\alpha$, consider finding the prediction interval $C_P(x)$ with the shortest possible length, 
subject to requiring marginal coverage to be at least $1-\alpha$ under the distribution $P$. As the notation suggests, the coverage properties of 
$C_P(x)$ are specific to $P$ and are not distribution-free in any sense.
Formally, we define the set of intervals with marginal coverage under $P$ as
\[\Ccal_P(1-\alpha) = \Big\{C_P : \Pp{P}{Y\in C_P(X)}\geq 1-\alpha\Big\},\]
where $C_P(x)$ may denote a fixed or random interval (that is, $C_P$ is a function mapping points $x\in\R^d$ to  fixed or random subsets of $\R$).
We can then define the minimum possible length as
\begin{equation}\label{eqn:def_L_P}L_P(1-\alpha) = \inf_{C_P\in\Ccal_P(1-\alpha)}\Big\{\Ep{P_X}{\leb(C_P(X))}\Big\}.\end{equation}
If $C_P$ is random rather than fixed, then we should interpret the expectation as being taken with respect to the random draw of $X$
and the randomization in the construction of $C_P(X)$.

With these definitions in place, we present our main result, which proves a lower bound on the prediction interval width
of any method that attains distribution-free approximate conditional coverage.
\begin{theorem}\label{thm:main}
Suppose that \smash{$\Ch_n$} satisfies $(1-\alpha,\delta)$-CC~\eqref{eqn:approx_conditional_cov}. Then for all distributions $P$ where the marginal distribution $P_X$ has no atoms, 
\[\EE{\leb(\Ch_n(X_{n+1}))} \geq \inf_{c\in[0,1]} \left\{\frac{1-\alpha}{1-c\alpha}\cdot L_P(1-c\alpha\delta)\right\}.\]
\end{theorem}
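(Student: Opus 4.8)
The plan is to lower-bound $\EE{\leb(\Ch_n(X_{n+1}))}$ by exhibiting, for the given atomless $P$, a carefully chosen adversarial family of distributions against which $\Ch_n$ must still maintain $(1-\alpha,\delta)$-CC, and then reading off the conclusion by an averaging argument. The key idea is that $(1-\alpha,\delta)$-CC must hold \emph{for all} distributions, so we get to mix $P$ with a contaminating distribution in a way that $\Ch_n$ cannot detect from the training data (with reasonable probability), yet which forces the intervals to be wide on a $\delta$-fraction of the mass. First I would fix $P$ with $P_X$ atomless, and for a parameter to be optimized, construct a perturbed distribution $Q$ that agrees with $P$ on an event of probability $\approx 1-\delta$ but on the remaining $\delta$-fraction of $X$-space places a response distribution that is ``spread out'' (e.g.\ a distribution on $Y$ that is hard to cover with a short interval, such as one with heavy tails or large support). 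Because $P_X$ is atomless, I can carve out a measurable set $\Xcal$ with $P_X(\Xcal)=\delta$ to serve as the conditioning set in the definition of $(1-\alpha,\delta)$-CC.

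The core of the argument is a change-of-measure / indistinguishability step. Running $\Ch_n$ on $n$ i.i.d.\ draws from $Q$ versus from a related reference distribution, I want to argue that with substantial probability the training sample ``looks the same'' — this is where the $\frac{1-\alpha}{1-c\alpha}$ factor and the shifted level $1-c\alpha\delta$ enter: the parameter $c$ governs how much mass of the contamination the algorithm is effectively allowed to ``see.'' Concretely, I expect the proof to: (i) relate the conditional coverage of $\Ch_n$ under $Q$ on the set $\Xcal$ to the coverage of some induced (possibly randomized) interval $C_P$ under $P$, losing a factor that accounts for the event that the training data contained contaminated points; (ii) invoke the definition of $L_P(\cdot)$ in~\eqref{eqn:def_L_P} to conclude that the induced $C_P$ must have expected length at least $L_P(1-c\alpha\delta)$; and (iii) transfer this length bound back to $\EE{\leb(\Ch_n(X_{n+1}))}$ under the original $P$, since on the uncontaminated event the intervals produced by $\Ch_n$ on a $Q$-sample and on a $P$-sample have the same distribution. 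Taking an infimum over the free parameter $c\in[0,1]$ — which the adversary gets to choose — yields the stated bound. It may also be cleaner to phrase the indistinguishability via a coupling: couple a $P$-sample and a $Q$-sample so that they coincide unless a contaminated point is drawn, and bound the probability of the latter by something like $n\delta$ times a factor, though the theorem statement's clean form suggests the ``per-point'' randomization trick of Lemma~\ref{lem:alphadelta_MC_extend} is doing the work rather than a union bound over $n$ points.

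The main obstacle, I expect, is step (i): correctly accounting for the interaction between the training-data contamination and the conditional-coverage requirement on $\Xcal$, so that the loss is exactly the factor $\frac{1-\alpha}{1-c\alpha}$ at level shift $1-c\alpha\delta$ rather than something weaker. The subtlety is that $(1-\alpha,\delta)$-CC is a statement about conditioning on $X_{n+1}\in\Xcal$ at test time, while the contamination also affects the \emph{training} distribution; disentangling ``the test point fell in the bad region'' from ``the training sample was corrupted'' requires the right two-stage construction — essentially reverse-engineering the randomized construction in Lemma~\ref{lem:alphadelta_MC_extend} so that the lower bound matches the upper bound furnished by that lemma. A secondary technical point is handling randomized $\Ch_n$ and making sure all the $\inf$'s in the definition of $L_P$ are attained or approached, but that should be routine once the distributional construction is pinned down. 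I would also need $P_X$ atomless precisely to ensure the conditioning sets $\Xcal$ of every measure $\delta$ exist and that no single atom lets $\Ch_n$ ``cheat'' — consistent with the hypothesis in the statement and with Proposition~\ref{prop:exact_CC_infinite}.
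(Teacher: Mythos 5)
Your proposal gestures at some of the right ingredients (atomlessness is indeed used to carve out sets of measure exactly $\delta$, and auxiliary distributions do appear), but it is missing the two ideas that actually drive the paper's proof, and the mechanism you sketch would not produce the stated bound.

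First, you never identify the level-set decomposition that converts the random interval into a one-parameter family of deterministic intervals: the paper defines $C_{P,s}(x)=\{y:\PP{y\in\Ch_n(x)}>s\}$ and uses Fubini to write $\EE{\leb(\Ch_n(X_{n+1}))}=\int_0^1\Ep{P_X}{\leb(C_{P,s}(X))}\,\mathsf{d}s$. Each $C_{P,s}$ is then compared to $L_P(1-\alpha_s)$ where $\alpha_s$ is its own marginal miscoverage; the final infimum over $c$ falls out of Jensen's inequality applied to the convex map $\alpha\mapsto L_P(1-\alpha)$ over the interval $s\in[0,s_\star]$, not out of a randomization trick or an adversary ``choosing $c$.'' Without this decomposition there is no route from ``coverage on a $\delta$-set is $\geq 1-\alpha$'' to a quantitative statement about $L_P$.

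Second, and more fundamentally, the engine of the proof is a lemma (Lemma~\ref{lem:X_to_XY}) showing that $(1-\alpha,\delta)$-CC forces $\PPst{Y_{n+1}\in\Ch_n(X_{n+1})}{(X_{n+1},Y_{n+1})\in\Bcal}\geq 1-\alpha$ for \emph{any} set $\Bcal$ in $(X,Y)$-space with $P(\Bcal)\geq\delta$ --- that is, conditional coverage lifts from $X$-events to joint $(X,Y)$-events. Your ``contamination'' picture --- perturbing $P$ by making $Y|X$ heavy-tailed on a $\delta$-fraction of $X$-space and arguing indistinguishability from training data --- is not how this is proved and would not give the sharp constant. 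The paper never modifies $P$; it approximates $P$ by discrete distributions $Q(\Lcal)$ built from a finite sample $\Lcal$ drawn from $P$ itself, chosen so that under $Q(\Lcal)$ membership in $\Bcal$ is a function of $X$ alone (because all the sampled $X$-values are distinct), whence $(1-\alpha,\delta)$-CC applies directly. Averaging over $\Lcal$, with a small total-variation correction for with-versus-without replacement, recovers $P$. The set $\Bcal$ is then chosen to be exactly the low-coverage-probability region $\{(x,y):\PP{y\in\Ch_n(x)}\le s_\star\}$ (trimmed to $P$-measure exactly $\delta$ using atomlessness), which is what makes the final algebra close. Your proposal, by contrast, leaves the choice of conditioning set unspecified, relies on a coupling bound of order $n\delta$ that has no counterpart in the actual argument, and explicitly flags that you do not know how to disentangle the training-contamination effect from the test-conditioning effect --- precisely because that disentangling is done by a different lemma that you have not found.
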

\noindent How should we interpret this lower bound? 
Based on Lemma~\ref{lem:alphadelta_MC}, 
we can achieve $(1-\alpha,\delta)$-CC trivially by running split conformal prediction at the marginal coverage level $1-\alpha\delta$.
What would be the average width from such a procedure? As mentioned in Section~\ref{sec:background_split_conformal},
under certain assumptions on $P$,
\citet{lei2018distribution} prove that the split conformal method run at coverage level $1-\alpha\delta$ with a consistent regression algorithm $\muh$ will, with high probability,
output a prediction interval with width that is only $o(1)$ larger than the oracle interval, which has width $L_P(1-\alpha\delta)$.
More generally, for any $c\in[0,1]$, we can use the construction suggested in Lemma~\ref{lem:alphadelta_MC_extend}
combined with the split conformal method, now run at level $1-c\alpha\delta$, to instead
produce expected length $\approx \frac{1-\alpha}{1-c\alpha}\cdot L_P(1-c\alpha\delta)$.

Since Theorem~\ref{thm:main} demonstrates that any method satisfying $(1-\alpha,\delta)$-CC cannot beat this lower bound,
this means that the $(1-\alpha,\delta)$-CC property is impossible to attain beyond the trivial solution, i.e., by applying a method
that guarantees $(1-\alpha\delta)$-marginal coverage, which then yields $(1-\alpha,\delta)$-CC as a byproduct (or choosing some $c\in[0,1]$
for the more general construction). 
Since typically we would choose $\delta$ to be a small constant, this lower bound is indeed a substantial issue,
since $L_P(1-\alpha\delta)$ will generally be much larger than the length we would need if the distribution $P$ were known.

\section{Restricted conditional coverage}\label{sec:restricted_CC}
Our main result, Theorem~\ref{thm:main}, shows that our definition of approximate conditional coverage in~\eqref{eqn:approx_conditional_cov} is too strong; it is
impossible to construct a meaningful procedure  satisfying this definition.
One way to weaken this condition is to restrict which sets $\Xcal$ we consider, yielding a less stringent notion
of approximate conditional coverage. 

For example, we can require that the coverage guarantee
holds ``locally'', by conditioning only on any {\em ball} with sufficient probability $\delta$, rather than on an arbitrary subset $\Xcal\subseteq\R^d$. 
More concretely, we might require that
\begin{multline}\label{eqn:local_conditional_cov}
\PPst{Y_{n+1}\in \Ch_n(X_{n+1})}{X_{n+1}\in\mathbb{B}(x,r)} \geq 1- \alpha\textnormal{ for all distributions $P$}\\\textnormal{ and all $x\in\R^d, r\geq 0$ with $\Pp{P_X}{X\in\mathbb{B}(x,r)} \geq \delta$.}
\end{multline}
Here $\mathbb{B}(x,r)$ is the closed $\ell_2$ ball centered at $x$ with radius $r$. In the doctor/patient example, 
we can think of this as requiring 95\% predictive accuracy on average over the subgroup of population consisting of patients similar to a given patient $x$, where
similarity is defined with the $\ell_2$ norm (of course, we can also generalize this to different metrics).
As another example, \citet{vovk2012conditional,lei2014distribution} consider a version of conformal prediction  that guarantees coverage within each one of a finite number of subgroups, i.e.
\begin{multline}\label{eqn:local_conditional_cov}
\PPst{Y_{n+1}\in \Ch_n(X_{n+1})}{X_{n+1}\in\Xcal_k} \geq1- \alpha\textnormal{ for all distributions $P$}\\\textnormal{ and for all $k=1,\dots,K$,}
\end{multline}
for some fixed partition $\R^d = \Xcal_1\cup \dots \cup \Xcal_K$ of the feature space. Here we may think of predefining subgroups of patients 
(males below age 25, males age 25--35, etc.) and requiring  95\% predictive accuracy on average over each predefined subgroup. 

More generally, suppose we are given a collection $\Xset$ of measurable subsets of $\R^d$. 
We say that
\smash{$\Ch_n$} satisfies distribution-free approximate conditional coverage at level $1-\alpha$ and tolerance $\delta>0$ relative
to the collection $\Xset$, denoted by $(1-\alpha,\delta,\Xset)$-CC, if
\begin{multline}\label{eqn:approx_conditional_cov_Xset}
\PPst{Y_{n+1}\in \Ch_n(X_{n+1})}{X_{n+1}\in\Xcal} \geq 1-\alpha\textnormal{ for all distributions $P$}\\\textnormal{ and all $\Xcal\in\Xset$ with $P_X(\Xcal) \geq \delta$.}
\end{multline}
To avoid degenerate scenarios, we will assume that we always have $\R^d \in\Xset$, meaning that requiring $(1-\alpha,\delta,\Xset)$-CC is always at least as strong as requiring $(1-\alpha)$-MC. Of course, this definition yields the original $(1-\alpha,\delta)$-CC condition if we take $\Xset$ to be the collection of all measurable
sets. If the class $\Xset$ is too rich, then, our main result in Theorem~\ref{thm:main} proves that $(1-\alpha,\delta,\Xset)$-CC is impossible
to achieve beyond trivial solutions. We may ask then whether it's possible to construct
meaningful prediction intervals when $\Xset$ is sufficiently restricted.

In the following, we will first construct a concrete algorithm, based on the split conformal prediction method, that
attains $(1-\alpha,\delta,\Xset)$-CC. Afterwards, we will attempt to determine how the complexity of the class $\Xset$ determines whether 
this algorithm provides meaningful prediction intervals (i.e., narrower intervals than the lower bound of Theorem~\ref{thm:main}), and indeed if this is possible to attain with any algorithm.

\subsection{Split conformal  for restricted conditional coverage}\label{sec:split_Xset}
As a concrete example, we will construct a variant of the split conformal
prediction method, and will generalize \citet{lei2018distribution}'s results on the efficiency of split conformal prediction to establish conditions under which the resulting prediction intervals are 
asymptotically efficient.

Let $\muh_{n_0}(x)$ be some fitted regression function, which estimates the conditional mean of $Y$ given $X=x$. As before, we require that $\muh_{n_0}$ is fitted on the first $n_0$ training samples, $(X_1,Y_1),\dots,(X_{n_0},Y_{n_0})$.
Next, define the residual
\[R_i = \big|Y_i - \muh_{n_0}(X_i)\big|\]
on the remaining training samples $i=n_0+1,\dots,n$ and on the test point $i=n+1$.
(As for the original split conformal method, this procedure can be generalized to include a local scale estimate, $\widehat{\sigma}_{n_0}(X_i)$, 
or to allow for an asymmetric interval that treats the right and left tails of the residuals differently,
but we do not include these generalizations here.)

The original split conformal method operates by observing that the test point residual, $R_{n+1}$, is equally likely to occur
anywhere in the ranked list of residuals $R_{n_0+1},\dots,R_n,R_{n+1}$, i.e., the test residual is exchangeable with the $n_1$ many residuals from 
the held-out portion of the training data. The split conformal prediction interval~\eqref{eqn:split_conformal_define}
is then constructed as
\[\Ch_n(x) = \big[\muh_{n_0}(x) - \qh_{n_1}, \muh_{n_0}(x) + \qh_{n_1}\big],
\]
where $\qh_{n_1}$ is the
$\left\lceil (1-\alpha)(n_1 + 1)\right\rceil$-smallest value amongst
$R_{n_0+1},\dots,R_n$.  The width of this prediction interval is
determined by this residual quantile $\qh_{n_1}$, which is calculated
by pooling all residuals from the holdout set $i=n_0+1,\dots,n$ and is
therefore calibrated to give the appropriate coverage level {\em on
  average} over the distribution $P$.

We now need to modify this construction to guarantee a stronger notion of coverage---we need
to ensure coverage on average over any $\Xcal\in\Xset$ with $P_X(\Xcal)\geq \delta$.
We will need to modify the width of the prediction interval---for example, for a set $\Xcal$ where residuals tend to be large (i.e., $|Y-\muh_{n_0}(X)|$
is likely to be large if we condition on $X\in\Xcal$), the split conformal interval constructed above is too narrow to achieve $1-\alpha$ coverage
on average over this set. We will therefore construct a new interval,
\begin{equation}\label{eqn:PI_Xset}
\Ch_n(x) = \left[\muh_{n_0}(x) -  \qh_{n_1}(x),\muh_{n_0}(x) +  \qh_{n_1}(x)\right].\end{equation}
The width of the interval is now defined locally by the quantity $\qh_{n_1}(x)$, which we will address next. Intuitively, 
if $x$ belongs to a set $\Xcal$ within which residuals tend to be large, we will need $\qh_{n_1}(x)$ to be large in order to
achieve the right coverage level on average over $\Xcal$.

We now construct $\qh_{n_1}(x)$. First, we will
  narrow down the class of subsets to consider. Define
  \[\widehat{N}_{n_1}(\Xcal)=\sum_{i=n_0+1}^n\One{X_i\in\Xcal},\]
the number of holdout points that lie in $\Xcal$.
Next, let
\[\widehat{\Xset}_{n_1} = \left\{\Xcal\in\Xset : \widehat{N}_{n_1}(\Xcal) \geq \delta n_1\left(1  - \sqrt{\frac{2\log(n_1)}{\delta n_1}}\right)\right\} \quad \subseteq \Xset.\]
This definition ensures that, if a given subset $\Xcal$ has probability $\geq \delta$ under $P$, then we will include $\Xcal\in\widehat{\Xset}_{n_1}$ with high probability.
Next let
\begin{multline*}\qh_{n_1}(\Xcal) =  \textnormal{ the $\left\lceil\left(1-\alpha+\frac{1}{n_1}\right)\cdot \big(\widehat{N}_{n_1}(\Xcal)+1\big)\right\rceil$-th smallest value}\\\textnormal{ of }\big\{R_i:n_0+1\leq i\leq n, X_i\in\Xcal\big\}.\end{multline*}
Finally, we set
\begin{equation}\label{eqn:define_qh}\qh_{n_1}(x) = \sup_{\Xcal\in\widehat{\Xset}_{n_1}: x \in\Xcal} \qh_{n_1}(\Xcal) .\end{equation}
(Recall that $\R^d \in\Xset$ by assumption, and thus $\R^d \in \widehat\Xset_{n_1}$,  so there is always at least one set $\Xcal$ in this supremum.)

Our next result proves that this construction achieves the desired approximate conditional coverage property.
\begin{theorem}\label{thm:Xset}
For any class $\Xset$ of measurable subsets of $\R^d$, the prediction interval defined in~\eqref{eqn:PI_Xset}
satisfies $(1-\alpha,\delta,\Xset)$-CC~\eqref{eqn:approx_conditional_cov_Xset}.
\end{theorem}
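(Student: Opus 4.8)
The plan is to fix a distribution $P$ and a set $\Xcal^*\in\Xset$ with $p:=P_X(\Xcal^*)\geq\delta$ (so in particular $p>0$), and to bound the \emph{joint} miscoverage probability $\PP{Y_{n+1}\notin\Ch_n(X_{n+1}),\,X_{n+1}\in\Xcal^*}$ by $\alpha p$; dividing by $p$ then yields the conditional bound in~\eqref{eqn:approx_conditional_cov_Xset}. Write $Z_i=\One{X_i\in\Xcal^*}$. The first observation is that $Y_{n+1}\notin\Ch_n(X_{n+1})$ is the same event as $R_{n+1}>\qh_{n_1}(X_{n+1})$, and that, whenever $X_{n+1}\in\Xcal^*$ and $\Xcal^*\in\widehat\Xset_{n_1}$, the supremum in~\eqref{eqn:define_qh} gives $\qh_{n_1}(X_{n+1})\geq\qh_{n_1}(\Xcal^*)$. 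Splitting on whether $\Xcal^*\in\widehat\Xset_{n_1}$, it therefore suffices to bound the two terms
\[
\underbrace{\PP{R_{n+1}>\qh_{n_1}(\Xcal^*),\,X_{n+1}\in\Xcal^*,\,\Xcal^*\in\widehat\Xset_{n_1}}}_{(\mathrm{I})}
\quad\text{and}\quad
\underbrace{\PP{X_{n+1}\in\Xcal^*,\,\Xcal^*\notin\widehat\Xset_{n_1}}}_{(\mathrm{II})},
\]
showing $(\mathrm{I})\leq(\alpha-\tfrac1{n_1})p$ and $(\mathrm{II})\leq\tfrac1{n_1}p$.

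For term $(\mathrm{I})$ the key is an exchangeability argument conditional on the $\sigma$-algebra $\mathcal G$ generated by $(X_1,Y_1),\dots,(X_{n_0},Y_{n_0})$ together with the indicator vector $(Z_{n_0+1},\dots,Z_{n+1})$. Since $\muh_{n_0}$ is a function of the first $n_0$ points and the pairs $(R_i,Z_i)$ for $i=n_0+1,\dots,n+1$ are i.i.d.\ given those points, the residuals $\{R_i: n_0+1\leq i\leq n+1,\ Z_i=1\}$ are, conditionally on $\mathcal G$, i.i.d.\ and hence exchangeable, while both $m:=\widehat N_{n_1}(\Xcal^*)$ and the event $\{\Xcal^*\in\widehat\Xset_{n_1}\}$ are $\mathcal G$-measurable. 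On $\{X_{n+1}\in\Xcal^*\}$ this is a set of $m+1$ exchangeable residuals containing $R_{n+1}$. A short order-statistics computation shows $\{R_{n+1}>\qh_{n_1}(\Xcal^*)\}\subseteq\{R_{n+1}>R_{(k)}\}$, where $R_{(k)}$ is the $k$-th smallest of those $m+1$ values and $k=\lceil(1-\alpha+\tfrac1{n_1})(m+1)\rceil$ (the boundary case $k>m$, where $\qh_{n_1}(\Xcal^*)=+\infty$, is trivial, so we may assume $k\leq m+1$ and $\alpha\geq\tfrac1{n_1}$). Summing $\PPst{R_j>R_{(k)}}{\mathcal G}$ over the $m+1$ exchangeable indices $j$ and using that at most $m+1-k$ of the values can strictly exceed $R_{(k)}$ gives $\PPst{R_{n+1}>R_{(k)}}{\mathcal G}\leq\frac{m+1-k}{m+1}\leq\alpha-\tfrac1{n_1}$ on this event; this way of handling ties is cleaner than arguing via ranks. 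Multiplying by $\One{X_{n+1}\in\Xcal^*}$ and taking expectations over $\mathcal G$ yields $(\mathrm{I})\leq(\alpha-\tfrac1{n_1})\,p$.

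For term $(\mathrm{II})$, note that $\{\Xcal^*\notin\widehat\Xset_{n_1}\}$ depends only on the holdout features $X_{n_0+1},\dots,X_n$, hence is independent of $X_{n+1}$, so $(\mathrm{II})=p\cdot\PP{\Xcal^*\notin\widehat\Xset_{n_1}}$. Now $\widehat N_{n_1}(\Xcal^*)\sim\mathrm{Binomial}(n_1,p)$ with mean $n_1p\geq\delta n_1$, and $\Xcal^*\notin\widehat\Xset_{n_1}$ is exactly the event $\widehat N_{n_1}(\Xcal^*)<\delta n_1\bigl(1-\sqrt{2\log(n_1)/(\delta n_1)}\bigr)$; writing $s=\sqrt{2\log(n_1)/(\delta n_1)}$, a multiplicative Chernoff bound for the lower tail of a Binomial (together with $p\geq\delta$, and the fact that the threshold is nonpositive, so the event empty, when $s\geq1$) gives $\PP{\Xcal^*\notin\widehat\Xset_{n_1}}\leq\exp(-n_1 p\,s^2/2)\leq\exp(-\delta n_1 s^2/2)=\exp(-\log n_1)=1/n_1$. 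Hence $(\mathrm{II})\leq p/n_1$, and adding the two bounds gives $(\mathrm{I})+(\mathrm{II})\leq\alpha p$, as desired.

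I expect the main work to be in term $(\mathrm{I})$: getting the order-statistics inclusion $\{R_{n+1}>\qh_{n_1}(\Xcal^*)\}\subseteq\{R_{n+1}>R_{(k)}\}$ exactly right, handling ties via the "sum over exchangeable indices" trick, and—most importantly—verifying that conditioning on $\mathcal G$ really does render the relevant residuals exchangeable while simultaneously keeping $m$ and membership in $\widehat\Xset_{n_1}$ measurable (this is what lets the inequality $\frac{m+1-k}{m+1}\leq\alpha-\tfrac1{n_1}$ be applied pointwise in $\mathcal G$ before integrating). The concentration step in term $(\mathrm{II})$ is routine once one notices that the threshold defining $\widehat\Xset_{n_1}$ is tuned precisely so that the Chernoff exponent equals $\log n_1$, leaving a slack of exactly $1/n_1$ to absorb into the $\tfrac1{n_1}$ correction built into $\qh_{n_1}$.
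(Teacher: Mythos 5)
Your proof is correct and follows essentially the same approach as the paper's: split on whether $\Xcal\in\widehat\Xset_{n_1}$, bound the bad event by a multiplicative Chernoff argument, and bound the main term by an exchangeability/order-statistics argument after conditioning on the first $n_0$ points and the indicator vector (the paper conditions on $I$ and $\{X_{n+1}\in\Xcal\}$, which is the same $\sigma$-algebra). Your handling of ties via the "sum over exchangeable indices" inequality is slightly cleaner than the paper's claim that the conditional probability is "exactly" $1-\lceil\cdot\rceil/(|I|+1)$, but the substance is identical.
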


Of course, the supremum defined in~\eqref{eqn:define_qh} may be impossible to compute efficiently---this will naturally depend
on the structure of the class $\Xset$. (We expect that for simple cases, such as taking $\Xset$ to be the set of all $\ell_2$ balls
as for the ``local'' conditional coverage discussed earlier, we may be able to compute or approximate~\eqref{eqn:define_qh}  more efficiently; we leave
this as an open question for future work.)
Furthermore,
this guarantee does not yet establish that this method provides a meaningful prediction interval---it may be the case
that the intervals are too wide. We will examine this question next.

\subsection{Characterizing hardness with the VC dimension}
For a class $\Xset$ of subsets of $\R^d$, we write $\vc(\Xset)$ to denote
 the Vapnik--Chervonenkis dimension of the class $\Xset$. This measure of complexity is defined as follows.
For any finite set $\mathcal{A}$ of points in $\R^d$, we say that $\mathcal{A}$ is {\em shattered} by $\Xset$ if, for every subset of points $\Bcal\subseteq \mathcal{A}$, there exists some $\Xcal\in\Xset$
with $\Xcal\cap \mathcal{A} = \Bcal$. The VC dimension is then defined as
\[\vc(\Xset) = \max\left\{|\mathcal{A}| : \textnormal{$\mathcal{A}$ is shattered by $\Xset$}\right\},\]
i.e., the largest cardinality of any set shattered by $\Xset$. Well known examples include:
\begin{itemize}
\item If $\Xset$ is the set of all $\ell_2$ balls in $\R^d$, then $\vc(\Xset)=d+1$.
\item If $\Xset$ is the set of all half-spaces in $\R^d$, then $\vc(\Xset)=d+1$.
\item If $\Xset$ is the set of all intersections of $k$ different half-spaces in $\R^d$, then $\vc(\Xset)=\O{kd\log(k)}$~\citep[Lemma 3.2.3]{blumer1989learnability}.
\end{itemize}

While a large VC dimension of $\Xset$ ensures that there is {\em some} set of points $\mathcal{A}$ that is shattered by $\Xset$,
we need a stronger formulation to establish a hardness result for restricted conditional coverage. 
We will consider an ``almost everywhere'' version of the VC dimension, defined as follows:
\[\vcae(\Xset) = \max\left\{m\geq 0: \begin{array}{c}\textnormal{the class of sets $\mathcal{A}=\{a_1,\dots,a_m\}\subseteq \R^d$}\\\textnormal{such that $\Xset$ does {\em not} shatter $A$,}\\
\textnormal{has Lebesgue measure zero in $(\R^d)^m$}\end{array}\right\}\]
In other words, instead of searching for a {\em single} set $\mathcal{A}$ of size $m$ that is shattered by $\Xset$, we require
that {\em almost all} sets $\mathcal{A}$ of size $m$ are shattered by $\Xset$. It is trivial that $\vc(\Xset)\geq \vcae(\Xset)$, but in fact, 
the two may coincide---for example,\begin{itemize}
\item If $\Xset$ is the set of all $\ell_2$ balls in $\R^d$, then $\vcae(\Xset)=\vc(\Xset)=d+1$.
\item If $\Xset$ is the set of all half-spaces in $\R^d$, then $\vcae(\Xset)=\vc(\Xset)=d+1$.
\end{itemize}

In order to obtain a tight bound, we also need to define a slightly stronger notion of predictive coverage. Our previous definitions
(for marginal, conditional, and approximate conditional coverage) all calculated probabilities with respect to $P^{n+1}$ for some distribution $P$,
in other words, with the data points $(X_1,Y_1),\dots, (X_{n+1},Y_{n+1})$ drawn i.i.d.~from an arbitrary distribution. 
A more general setting is where these $n+1$ data points are instead assumed to be exchangeable (which includes i.i.d.~as a special case).
We thus define a notion of approximate conditional coverage under exchangeability, rather than the i.i.d.~assumption. We say that a procedure
\smash{$\Ch_n$} satisfies $(1-\alpha,\delta,\Xset)$-conditional coverage under exchangeability, denoted by $(1-\alpha,\delta,\Xset)$-CCE, if
\begin{multline}\label{eqn:approx_conditional_cov_Xset_exch}
\Ppst{\tilde{P}}{Y_{n+1}\in \Ch_n(X_{n+1})}{X_{n+1}\in\Xcal} \geq 1- \alpha\\ \textnormal{ for all exchangeable distributions $\tilde{P}$ on $(X_1,Y_1),\dots,(X_{n+1},Y_{n+1})$}\\\textnormal{ and all $\Xcal\in\Xset$ with $\Pp{\tilde{P}}{X_{n+1}\in\Xcal} \geq \delta$.}
\end{multline}
 Clearly, a procedure
\smash{$\Ch_n$} satisfying $(1-\alpha,\delta,\Xset)$-CCE will also satisfy $(1-\alpha,\delta,\Xset)$-CC by definition.
It is worth noting that all proofs of predictive coverage guarantees for conformal and split conformal prediction methods
do not require the i.i.d.~assumption but rather only need to assume exchangeability---that is, results such as Theorem~\ref{thm:Xset} continue
to hold, meaning that our split conformal method proposed in Section~\ref{sec:split_Xset} satisfies this stronger coverage property~\eqref{eqn:approx_conditional_cov_Xset_exch}.

We will now see how the VC dimension relates to the conditional coverage problem. We will show that:
\begin{itemize}
\item If $\vcae(\Xset)\geq 2n+2$, then the $(1-\alpha,\delta,\Xset)$-CCE property cannot be obtained beyond the trivial lower bound
given in Theorem~\ref{thm:main}.
\item On the other hand, if $\vc(\Xset)\ll \delta n/\log^2(n)$, then the split conformal method described in Section~\ref{sec:split_Xset}, which is 
guaranteed to satisfy $(1-\alpha,\delta,\Xset)$-CCE, produces prediction intervals of nearly optimal length under a location-family model.
\end{itemize}
An equivalent perspective is that with sufficiently many points $n$, the CCE property can be meaningfully attained.
We now formalize these results.

\subsubsection{A lower bound}
First, we will examine the setting where $\vcae(\Xset)\geq 2n+2$. In this setting, we will see that $(1-\alpha,\delta,\Xset)$-conditional
coverage (in its stronger form, with exchangeable rather than i.i.d.~data points) is incompatible with meaningful predictive intervals.
\begin{theorem}\label{thm:Xset_VC_lowerbd}
Suppose that \smash{$\Ch_n$} satisfies $(1-\alpha,\delta,\Xset)$-CCE as defined in~\eqref{eqn:approx_conditional_cov_Xset_exch}, where $\Xset$ satisfies $\vcae(\Xset)\geq 2n+2$. 
Then for all distributions $P$ where the marginal distribution $P_X$ is continuous with respect to Lebesgue measure,  we have
\[\EE{\leb(\Ch_n(X_{n+1}))} \geq \inf_{c\in[0,1]} \left\{\frac{1-\alpha}{1-c\alpha}\cdot L_P(1-c\alpha\delta)\right\}.\]
\end{theorem}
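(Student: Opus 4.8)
The plan is to reduce Theorem~\ref{thm:Xset_VC_lowerbd} to Theorem~\ref{thm:main} by showing that, under the hypothesis $\vcae(\Xset)\geq 2n+2$, any method satisfying $(1-\alpha,\delta,\Xset)$-CCE must in fact satisfy the full $(1-\alpha,\delta)$-CC property of~\eqref{eqn:approx_conditional_cov} — or at least satisfy it well enough that the conclusion of Theorem~\ref{thm:main} still goes through. The key intuition is that with only $n+1$ data points in play, a distribution $P$ with $P_X$ continuous can only ``see'' the geometry of an arbitrary measurable target set $\Xcal$ through the finitely many sample points that land in it. If almost every configuration of $2n+2$ points (we use $n+1$ training-plus-test points, and will need a second independent copy, hence $2n+2$) can be shattered by $\Xset$, then for an arbitrary $\Xcal_0$ we can find a set $\Xcal\in\Xset$ that agrees with $\Xcal_0$ on all the relevant sample points, almost surely. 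Replacing $\Xcal_0$ by $\Xcal$ changes nothing observable, so the CCE guarantee for $\Xcal$ transfers to a guarantee for $\Xcal_0$.

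First I would fix a distribution $P$ with $P_X$ continuous and an arbitrary measurable $\Xcal_0\subseteq\R^d$ with $P_X(\Xcal_0)\geq\delta$, and set up a coupling/exchangeability argument: take $n+1$ i.i.d.\ draws from $P$ (the ``real'' data and test point), plus an additional $n+1$ i.i.d.\ draws serving as a pool of potential extra points, so that the union is a set $\mathcal{A}$ of size $2n+2$ in general position, which by $\vcae(\Xset)\geq 2n+2$ is shattered by $\Xset$ with probability one. Then I would choose $\Xcal\in\Xset$ with $\Xcal\cap\mathcal{A}=\Xcal_0\cap\mathcal{A}$. Conditioning appropriately, the event $\{X_{n+1}\in\Xcal\}$ coincides with $\{X_{n+1}\in\Xcal_0\}$, and likewise the event $\{Y_{n+1}\in\Ch_n(X_{n+1})\}$ is unchanged because $\Ch_n$ is a function only of the (unchanged) data. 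The subtlety is the conditioning event in~\eqref{eqn:approx_conditional_cov_Xset_exch}: I need the \emph{exchangeable} distribution $\tilde P$ used there to be constructed so that $\PP{X_{n+1}\in\Xcal}\geq\delta$ almost surely, and so that conditioning on $\{X_{n+1}\in\Xcal\}$ under $\tilde P$ reproduces conditioning on $\{X_{n+1}\in\Xcal_0\}$ under $P^{n+1}$. This is exactly why the theorem is stated for CCE rather than CC — we get to choose a clever non-i.i.d.\ but exchangeable joint law that ``plants'' enough points inside the target region. Concretely I would build $\tilde P$ by a rejection/conditioning construction: draw the $2n+2$ points, condition on the shattering event and on enough of them lying in $\Xcal_0$, then symmetrize over the indices $1,\dots,n+1$ to restore exchangeability.

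Once this transfer is established — that $(1-\alpha,\delta,\Xset)$-CCE implies $\PPst{Y_{n+1}\in\Ch_n(X_{n+1})}{X_{n+1}\in\Xcal_0}\geq 1-\alpha$ for every measurable $\Xcal_0$ with $P_X(\Xcal_0)\geq\delta$, for every $P$ with continuous $P_X$ — I would invoke Theorem~\ref{thm:main} verbatim (its hypothesis ``$P_X$ has no atoms'' is implied by ``$P_X$ continuous w.r.t.\ Lebesgue measure,'' and continuity w.r.t.\ Lebesgue is exactly what makes the general-position/shattering event have probability one). That immediately yields $\EE{\leb(\Ch_n(X_{n+1}))}\geq \inf_{c\in[0,1]}\{\frac{1-\alpha}{1-c\alpha}\cdot L_P(1-c\alpha\delta)\}$, which is the claim.

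The main obstacle I expect is the bookkeeping in the exchangeability construction: ensuring that the planted exchangeable law $\tilde P$ simultaneously (i) has $\Pp{\tilde P}{X_{n+1}\in\Xcal}\geq\delta$ for the chosen $\Xcal\in\Xset$, (ii) induces, after conditioning on $\{X_{n+1}\in\Xcal\}$, exactly the conditional law of $(X_1,Y_1,\dots,X_{n+1},Y_{n+1})$ given $\{X_{n+1}\in\Xcal_0\}$ under $P^{n+1}$ (so that both the coverage event and its probability are preserved), and (iii) is genuinely exchangeable in the first $n+1$ coordinates. Getting all three at once requires care — in particular the count $2n+2$ rather than $n+1$ is there precisely so that we have a reservoir of auxiliary points guaranteeing the shattering event holds almost surely no matter where the ``real'' $n+1$ points fall, and so that the symmetrization does not disturb the shattering property. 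The measure-theoretic claim that the shattering event has full probability under $P_X$ continuous (using $\vcae$, not $\vc$) is the other place to be careful, but it is essentially the definition of $\vcae$ combined with $P_X^{\,2n+2}\ll\leb$ on $(\R^d)^{2n+2}$.
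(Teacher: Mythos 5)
Your high-level idea --- use the shattering hypothesis $\vcae(\Xset)\geq 2n+2$ together with the exchangeable (CCE) formulation to ``plant'' a set $\Xcal\in\Xset$ that mimics conditioning on an arbitrary target set --- is the right one, and your construction of an auxiliary exchangeable law $\tilde P$ from two independent pools of $n+1$ points is close in spirit to what the paper does. However, there is a genuine gap in how you close the argument.

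You propose to establish that CCE implies $\PPst{Y_{n+1}\in\Ch_n(X_{n+1})}{X_{n+1}\in\Xcal_0}\geq 1-\alpha$ for all measurable $\Xcal_0$ \emph{whenever $P_X$ is continuous}, and then ``invoke Theorem~\ref{thm:main} verbatim.'' This reduction fails. The hypothesis of Theorem~\ref{thm:main} is the distribution-free $(1-\alpha,\delta)$-CC property, and its proof actually \emph{uses} that property applied to auxiliary \emph{discrete} distributions: the proof routes through Lemma~\ref{lem:X_to_XY}, whose argument samples $Q(\Lcal)^{n+1}$ for an atomic $Q(\Lcal)$ and invokes the CC guarantee for that atomic law. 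Your transfer argument (necessarily) relies on $P_X$ being Lebesgue-continuous --- that is what makes the $2n+2$ points land in $\vcae$-general position almost surely --- so it cannot supply the CC guarantee for those atomic auxiliary laws. Verifying the CC property only for continuous $P_X$ does not let you treat Theorem~\ref{thm:main} as a black box.

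There is also a secondary mismatch in the intermediate target: what the Jensen/convexity step in the proof of Theorem~\ref{thm:main} actually requires is the extension of conditioning to sets $\Bcal\subseteq\R^d\times\R$ (Lemma~\ref{lem:X_to_XY}), not merely to sets of the form $\Xcal_0\times\R$. The set $\Bcal$ that gets fed into that lemma is built from level sets of $y\mapsto\PP{y\in\Ch_n(x)}$, which genuinely depends on $y$. So even setting aside the black-box issue, you would need the $(X,Y)$-joint version of the transfer, not the $X$-only version you sketch. The paper resolves both issues at once: it proves the $(X,Y)$-joint analog directly from CCE and shattering (its Lemma~\ref{lem:X_to_XY_Xset}, using $n+1$ draws from $P|\Bcal$ and $n+1$ from $P|\Bcal^c$, a uniformly random permutation, and i.i.d.\ Bernoulli assignments so that the conditional law $\tilde P(\Lcal)$ is exactly exchangeable and has $\Pp{\tilde P(\Lcal)}{X_{n+1}\in\Xcal(\Lcal)}=\delta'\geq\delta$), and then \emph{re-runs} the measure-theoretic portion of Theorem~\ref{thm:main}'s proof with this lemma in place of Lemma~\ref{lem:X_to_XY}, rather than citing the theorem's statement. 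Your proof proposal should be restructured accordingly.
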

\noindent In other words, if $\vcae(\Xset)\geq 2n+2$, the lower bound proved here
is identical to that of Theorem~\ref{thm:main}, which is the trivial lower bound that can be obtained by 
simply requiring marginal coverage at a far stricter level.  (For example, if we take $\Xset$ to be the collection of 
all balls or all half-spaces in $\R^d$ for $d\geq 2n+1$, then this condition on $\vcae(\Xset)$ will hold.)
We remark that it is possible to prove a similar result for the $(1-\alpha,\delta,\Xset)$-CC condition (rather than the stronger $(1-\alpha,\delta,\Xset)$-CCE condition),
but in that case we are only able to show this result when $\vcae(\Xset)\gg n^2$.

\subsubsection{An upper bound}
Next, we prove that efficient prediction is possible when the VC dimension is low.
 
Since our construction given in~\eqref{eqn:PI_Xset} uses a symmetric interval
around an initial model $\muh_{n_0}$, with the width of the interval selected
to cover the worst-case scenario in terms of the choice of $\Xcal$, we can only hope
for efficiency as compared to the best ``oracle'' interval of this form.
For a fixed function $\mu:\R^d\rightarrow\R$ and for any $\Xcal\in\Xset$ with nonzero probability under $P_X$, define
\begin{multline*}q^*_{P,\mu,\alpha}(\Xcal) = \textnormal{the $(1-\alpha)$-quantile
of $|Y-\mu(X)|$,}\\\textnormal{under the distribution $(X,Y)\sim P$ conditional on $X\in\Xset$.}\end{multline*}
Next, for any $x\in\R^d$, define
\[q^*_{P,\mu,\alpha,\delta}(x) = \sup_{\Xcal\in\Xset : x\in\Xcal, P_X(\Xcal)\geq\delta} q^*_{P,\mu,\alpha}(\Xcal),\]
the maximum quantile over any set $\Xcal$ containing the point $x$.
We will then consider the ``oracle'' prediction interval
\begin{equation}\label{eqn:Xset_oracle}C_{P,\mu,\alpha,\delta}^*(x) = \big[\mu(x)-  q^*_{P,\muh_{n_0},\alpha,\delta}(x) ,\mu(x)+q^*_{P,\muh_{n_0},\alpha,\delta}(x) \big].\end{equation}
We can easily verify that $C_{P,\mu,\alpha,\delta}^*(x)$ satisfies
\[\Ppst{P}{Y\in C_{P,\mu,\alpha,\delta}^*(X)}{X\in\Xcal}\geq 1-\alpha\]
for all $\Xcal\in\Xset$ with $P_X(\Xcal)\geq\delta$.

Our main result proves that, if the collection $\Xset$ has sufficiently small VC dimension,
then with high probability the prediction interval $\Ch_n$ constructed in~\eqref{eqn:PI_Xset} above
is essentially the same as the ``oracle'' interval defined in~\eqref{eqn:Xset_oracle}, when constructed around
the pre-trained model $\mu= \muh_{n_0}$. To formalize this, we show that $\Ch_n$ is bounded above and below 
by oracle intervals with slightly perturbed values of $\alpha$ and $\delta$.
\begin{theorem}\label{thm:Xset_VC_upperbd}
Assume that $\vc(\Xset)\geq 1$ and $n_1\geq 2$. Then for every $x\in\R^d$,
if $\vc(\Xset)\leq c \cdot \frac{ \delta n_1}{\log^2(n_1)}$, then
the split conformal prediction interval \smash{$\Ch_n$} defined in~\eqref{eqn:PI_Xset} 
 satisfies 
\[\Pp{P^n}{C_{P,\muh_{n_0},\alpha_+,\delta_+}^*(x)\subseteq \Ch_n(X_{n+1})\subseteq  C_{P,\muh_{n_0},\alpha_-,\delta_-}^*(x)}\geq 1 - \frac{1}{n_1},\]
where
\[\alpha_+= \alpha + c_\alpha\sqrt{\frac{\vc(\Xset)\log^2(n_1)}{\delta n_1}}, \quad \alpha_- =  \alpha - c_\alpha\sqrt{\frac{\vc(\Xset)\log^2(n_1)}{\delta n_1}}\]
and
\[\delta_+ = \delta + c_\delta\sqrt{\frac{\vc(\Xset)\log^2(n_1)}{n_1}}, \quad \delta_- = \delta - c_\delta\sqrt{\frac{\vc(\Xset)\log^2(n_1)}{n_1}}\]
where $c,c_\alpha,c_\delta$ are universal constants.
\end{theorem}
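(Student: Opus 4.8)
The plan is to condition on the first $n_0$ training points throughout, so that $\muh_{n_0}$ is a fixed function and the residuals $R_i = |Y_i-\muh_{n_0}(X_i)|$ for $i=n_0+1,\dots,n$ are i.i.d.\ copies of $R:=|Y-\muh_{n_0}(X)|$ with $(X,Y)\sim P$; it then suffices to establish the sandwiching on a single event of conditional probability at least $1-1/n_1$ over these $n_1$ points and integrate out. Since $\Ch_n(x)$, $C^*_{P,\muh_{n_0},\alpha_+,\delta_+}(x)$ and $C^*_{P,\muh_{n_0},\alpha_-,\delta_-}(x)$ are all intervals centered at $\muh_{n_0}(x)$, the containment in the theorem is equivalent to
\[
q^*_{P,\muh_{n_0},\alpha_+,\delta_+}(x)\ \le\ \qh_{n_1}(x)\ \le\ q^*_{P,\muh_{n_0},\alpha_-,\delta_-}(x).
\]
Writing $p(\Xcal):=P_X(\Xcal)$ and recalling $\qh_{n_1}(x)=\sup_{\Xcal\in\widehat{\Xset}_{n_1}:\,x\in\Xcal}\qh_{n_1}(\Xcal)$ and $q^*_{P,\muh_{n_0},\alpha,\delta}(x)=\sup_{\Xcal\in\Xset:\,x\in\Xcal,\,p(\Xcal)\ge\delta}q^*_{P,\muh_{n_0},\alpha}(\Xcal)$, it is enough to show, on one high-probability event: (i) $\{\Xcal:p(\Xcal)\ge\delta_+\}\subseteq\widehat{\Xset}_{n_1}$; (ii) $\widehat{\Xset}_{n_1}\subseteq\{\Xcal:p(\Xcal)\ge\delta_-\}$; and (iii) $q^*_{P,\muh_{n_0},\alpha_+}(\Xcal)\le\qh_{n_1}(\Xcal)\le q^*_{P,\muh_{n_0},\alpha_-}(\Xcal)$ for every $\Xcal\in\widehat{\Xset}_{n_1}$. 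Indeed, (iii) with (ii) gives $\qh_{n_1}(x)\le\sup_{\Xcal:\,x\in\Xcal,\,p(\Xcal)\ge\delta_-}q^*_{P,\muh_{n_0},\alpha_-}(\Xcal)=q^*_{P,\muh_{n_0},\alpha_-,\delta_-}(x)$, and (iii) with (i) gives the matching lower bound; the boundary cases where some $\alpha_\pm,\delta_\pm$ fall outside $[0,1]$ make the corresponding containment trivial.

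All three facts follow from one uniform relative deviation bound applied to two VC classes: the class $\Xset$ itself, of dimension $v:=\vc(\Xset)$, controlling $\widehat p(\Xcal):=\widehat{N}_{n_1}(\Xcal)/n_1$ against $p(\Xcal)$; and the class $\{\Xcal\times(-\infty,t]:\Xcal\in\Xset,\,t\in\R\}$ of subsets of $\R^{d+1}$, applied to the pairs $(X_i,R_i)$, whose VC dimension is $O(v)$ since taking products with the VC-dimension-one family of half-lines inflates dimension by at most a universal factor (via Sauer's lemma and a counting argument). Applying the Vapnik--Chervonenkis relative deviation inequality to each class with failure probability $\asymp 1/n_1$ and a union bound, we get, uniformly over $\Xcal\in\Xset$ and $t\in\R$, that $|\widehat p(\Xcal)-p(\Xcal)|\lesssim\sqrt{p(\Xcal)\,\epsilon_{n_1}}+\epsilon_{n_1}$ and $\big|\tfrac1{n_1}\sum_{i=n_0+1}^n\One{X_i\in\Xcal,\ R_i\le t}-P(X\in\Xcal,\,R\le t)\big|\lesssim\sqrt{P(X\in\Xcal,R\le t)\,\epsilon_{n_1}}+\epsilon_{n_1}$, with $\epsilon_{n_1}\asymp v\log(n_1)/n_1$. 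Since $v\le c\,\delta n_1/\log^2(n_1)$ forces $\delta n_1\gtrsim\log^2(n_1)$ (so the threshold in $\widehat{\Xset}_{n_1}$ is at least $\tfrac12\delta n_1$) and $\epsilon_{n_1}\ll\delta$ for $c$ small, facts (i) and (ii) follow by bounding $\delta-p(\Xcal)$ directly: for $\Xcal\in\widehat{\Xset}_{n_1}$ one has $\delta-p(\Xcal)\le\sqrt{2\delta\log(n_1)/n_1}+O(\sqrt{\epsilon_{n_1}})\le c_\delta\sqrt{v\log^2(n_1)/n_1}$, and symmetrically for (i), for universal $c_\delta$. In particular every $\Xcal\in\widehat{\Xset}_{n_1}$ satisfies $p(\Xcal)\gtrsim\delta$ and $\widehat p(\Xcal)\ge p(\Xcal)/2$.

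For (iii), fix $\Xcal\in\widehat{\Xset}_{n_1}$ and set $F_\Xcal(t)=P(R\le t\mid X\in\Xcal)=P(X\in\Xcal,R\le t)/p(\Xcal)$ with empirical counterpart $\widehat F_\Xcal$. Turning the two numerator/denominator deviation bounds into a bound on the ratio --- using $\widehat p(\Xcal)\ge p(\Xcal)/2$, $P(X\in\Xcal,R\le t)\le p(\Xcal)$ and $p(\Xcal)\gtrsim\delta$ --- yields the uniform additive bound $\sup_t|\widehat F_\Xcal(t)-F_\Xcal(t)|\le\beta:=C\sqrt{v\log(n_1)/(\delta n_1)}$, which is where the $1/\delta$ amplification present in $\alpha_\pm$ enters. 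By construction $\qh_{n_1}(\Xcal)$ is the empirical $\tau$-quantile of $\widehat F_\Xcal$ at a level $\tau$ with $1-\alpha\le\tau\le 1-\alpha+O(1/(\delta n_1))$ (accounting for the ceiling, the $+1/n_1$ correction, and $\widehat{N}_{n_1}(\Xcal)\gtrsim\delta n_1$), while $q^*_{P,\muh_{n_0},\alpha_\pm}(\Xcal)$ is the $(1-\alpha_\mp)$-quantile of $F_\Xcal$. The elementary inversion inequalities $\widehat F_\Xcal^{-1}(\tau)\ge F_\Xcal^{-1}(\tau-\beta)$ and $\widehat F_\Xcal^{-1}(\tau)\le F_\Xcal^{-1}(\tau+\beta)$, valid for arbitrary CDFs (atoms included), then give (iii) as soon as $c_\alpha\sqrt{v\log^2(n_1)/(\delta n_1)}$ dominates $\beta+O(1/(\delta n_1))$, which holds for universal $c_\alpha$ whenever $n_1\ge2$. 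Combining (i)--(iii) with the two supremum identities completes the proof; note the stated $\log^2(n_1)$ is comfortably slack, a single $\log$ suffices.

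The step I expect to be most delicate is (iii): converting the joint-probability deviation bounds into a clean additive bound on the \emph{conditional} CDF $\widehat F_\Xcal$ with exactly the displayed $\sqrt{v\log^2(n_1)/(\delta n_1)}$ rate, and then tracking the several $O(1/(\delta n_1))$ discretization terms --- from the ceiling and the $1/n_1$ correction in $\qh_{n_1}(\Xcal)$, and from the gap between $\widehat{N}_{n_1}(\Xcal)$ and $\delta n_1$ in the definition of $\widehat{\Xset}_{n_1}$ --- so that they are all absorbed into $\alpha_\pm,\delta_\pm$ with genuinely universal constants, i.e.\ with no residual dependence on $\alpha$, $\delta$, or $P$. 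The empirical-process machinery itself (Sauer's lemma, the product-VC bound, the relative VC inequality) is standard; the care is entirely in the constants and in verifying that the various perturbation terms line up.
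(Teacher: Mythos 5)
Your proposal is correct and follows the same structural decomposition as the paper's proof: condition on the first $n_0$ points, sandwich $\widehat{\Xset}_{n_1}$ between the two populations of sets with $P_X$-mass $\geq\delta_+$ and $\geq\delta_-$, sandwich $\qh_{n_1}(\Xcal)$ between the two population quantiles at levels $\alpha_\pm$ over those sets, and combine the suprema defining $\qh_{n_1}(x)$ and $q^*_{P,\mu,\cdot,\cdot}(x)$. The only real differences are in the machinery used to get the two uniform concentration statements. The paper applies a Bousquet-type inequality together with a peeling argument (citing Koltchinskii) to get $\big|\widehat{N}_{n_1}(\Xcal)/n_1 - P_X(\Xcal)\big|\lesssim\sqrt{P_X(\Xcal)}\sqrt{\vc(\Xset)\log^2(n_1)/n_1}+\log(n_1)/n_1$ uniformly over $\Xset$, where you invoke the classical Vapnik--Chervonenkis relative deviation inequality; the bound forms are identical, so this is cosmetic. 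For the joint event in $(X,R)$, the paper considers the class $\{\tilde\Xcal\}$ with a \emph{set-dependent} threshold $t=q^*_{P,\mu,\alpha_-}(\Xcal)$ and proves the tighter bound $\vc(\{\tilde\Xcal\})\leq\vc(\Xset)+1$ via its Lemma~\ref{lem:vc}, whereas you use the full product class $\{\Xcal\times(-\infty,t]\}$ with a free threshold and the coarser but equally serviceable $O(\vc(\Xset))$ bound from Sauer's lemma. Your route through the conditional empirical CDF $\widehat F_\Xcal$ and quantile inversion is a mild reorganization of the paper's direct manipulation of counts in terms of $\widehat{N}_{n_1}(\Xcal)$; both deliver the same $\sqrt{\vc(\Xset)\log^2(n_1)/(\delta n_1)}$ perturbation after dividing by $P_X(\Xcal)\gtrsim\delta$. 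None of these choices changes the result, and your remark that a single $\log(n_1)$ likely suffices (avoiding peeling) is plausible and would only sharpen the stated theorem.
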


\subsubsection{Special case: the location family with i.i.d.~noise}\label{sec:location_family}
While the result given in Theorem~\ref{thm:Xset_VC_upperbd}
 is quite general (we do not assume anything about the distribution $P$), we can consider
a special case where, given strong conditions on $P$, the prediction interval $\widehat{C}_n(X_{n+1})$ nearly
matches a much stronger oracle---namely, the narrowest possible valid prediction interval.

Our discussion for this setting will closely follow the work of \citet{lei2018distribution}, for the split conformal method.
We first describe their results. Their work assumes a
 location-family model:
\begin{equation}\label{eqn:loc_family}
\begin{array}{c}
\textnormal{The distribution of $Y |  X$ is given by $Y_i = \mu_P(X_i)+\eps_i$,}\\
\textnormal{where $\mu_P(x)$ is a fixed function, and the $\eps_i$'s are i.i.d.~with density $f_\eps$,}\\\textnormal{where $f_\eps(t)$ is
symmetric around $t=0$, and nonincreasing for $t\geq 0$.}\\
\end{array}
\end{equation}
 \citet{lei2018distribution} additionally assume that the estimator $\muh_{n_0}(x)$ of the true mean function $\mu_P(x)$ is consistent---Assumption
 A4 in their work requires that
\begin{equation}\label{eqn:muh_consistent}
\PP{\EEst{\left(\muh_{n_0}(X) - \mu_P(X)\right)^2}{\muh_{n_0}}\leq \eta_{n_0}} \geq 1-\rho_{n_0},
\end{equation}
where we should think of the quantities $\eta_{n_0},\rho_{n_0}$ as small or vanishing.
To interpret this assumption, the probability on the outside is taken with respect to the training data $(X_1,Y_1),\dots,(X_{n_0},Y_{n_0})$
used to fit the model $\muh_{n_0}$, while the conditional expectation on the inside is taken with respect to a new draw $X\sim P_X$.

Under conditions~\eqref{eqn:loc_family} and~\eqref{eqn:muh_consistent},
  \citet{lei2018distribution} prove that
the split conformal method~\eqref{eqn:split_conformal_define} is asymptotically efficient as $n_0,n_1\rightarrow\infty$, satisfying bounds of the form
\begin{equation}\label{eqn:split_conformal_asymp_efficient_sketch}
\leb\big(\Ch_n(X_{n+1})\,\triangle\,C_P^*(X_{n+1})\big) = o_P(1),
\end{equation}
where $\triangle$ denotes the symmetric set difference, and where
$C_P^*(x)$ denotes the ``oracle'' prediction interval that we would build if we knew the distribution $P$---under the simple
model~\eqref{eqn:loc_family} for $P$ above, this interval has the form
\[C_P^*(x) = \mu_P(x) \pm q^*_{\eps,\alpha},\]
where $q^*_{\eps,\alpha}$ denotes the $(1-\alpha/2)$ quantile of $f_\epsilon$ (i.e., the $(1-\alpha)$-quantile of the distribution of $|\eps|$).

We now extend this result to the setting of approximate conditional coverage.
Specifically, working under the same assumptions, we will prove that our proposed algorithm~\eqref{eqn:PI_Xset}, 
which is constructed to satisfy the $(1-\alpha,\delta,\Xset)$-CC property, will also return an interval that is asymptotically
equivalent to the oracle interval $C_P^*$ as long as $\vc(\Xset)$ is not too large.
\begin{corollary}\label{cor:Xset_VC_upperbd}
Under the conditions of Theorem~\ref{thm:Xset_VC_upperbd} together with assumptions~\eqref{eqn:loc_family}
and~\eqref{eqn:muh_consistent}, 
if $\vc(\Xset)\leq c \cdot \frac{ \delta n_1}{\log^2(n_1)}$,
then the split conformal prediction interval \smash{$\Ch_n$} defined in~\eqref{eqn:PI_Xset} satisfies
\[\leb\big(\Ch_n(X_{n+1})\,\triangle\,C_P^*(X_{n+1})\big) \leq c'\left(\frac{\eta_{n_0}^{1/3}}{\delta^{1/2}} + \frac{\eta_{n_0}^{1/3} + \sqrt{\frac{\vc(\Xset)\log^2(n_1)}{\delta n_1}} }{f_\eps(q^*_{\eps,\alpha/2})}\right)\]
with probability at least $1-\frac{1}{n_1} - 2\rho_{n_0} - \eta_{n_0}^{1/3}$, where $c,c'$ are universal constants.
\end{corollary}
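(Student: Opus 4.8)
\emph{Proof proposal.} The plan is to combine the high-probability sandwich of Theorem~\ref{thm:Xset_VC_upperbd} with a quantile-stability argument that exploits the location-family structure~\eqref{eqn:loc_family}. Write $\Delta(x)=\muh_{n_0}(x)-\mu_P(x)$ and $\hat r=\qh_{n_1}(X_{n+1})$, so that $\Ch_n(X_{n+1})=[\muh_{n_0}(X_{n+1})-\hat r,\ \muh_{n_0}(X_{n+1})+\hat r]$ while $C_P^*(X_{n+1})=[\mu_P(X_{n+1})-q^*_{\eps,\alpha},\ \mu_P(X_{n+1})+q^*_{\eps,\alpha}]$. Using the elementary bound $\leb\big([a-\rho,a+\rho]\,\triangle\,[b-\sigma,b+\sigma]\big)\le 2|a-b|+2|\rho-\sigma|$ for finite intervals, it suffices to control $|\Delta(X_{n+1})|$ and $|\hat r-q^*_{\eps,\alpha}|$ separately. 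For the first, condition on the event $E_0=\big\{\EEst{\Delta(X)^2}{\muh_{n_0}}\le\eta_{n_0}\big\}$, which has probability at least $1-\rho_{n_0}$ by~\eqref{eqn:muh_consistent}, and apply Markov's inequality to the fresh draw $X_{n+1}$ (independent of the training data) at threshold $\eta_{n_0}^{1/3}$: off an additional event of probability $\le\eta_{n_0}^{1/3}$ we get $|\Delta(X_{n+1})|\le\eta_{n_0}^{1/3}$, which since $\delta\le1$ is within the first term of the claimed bound.

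For the width, Theorem~\ref{thm:Xset_VC_upperbd} tells us that, on an event of probability $\ge 1-1/n_1$, the three intervals $C^*_{P,\muh_{n_0},\alpha_+,\delta_+}(X_{n+1})\subseteq\Ch_n(X_{n+1})\subseteq C^*_{P,\muh_{n_0},\alpha_-,\delta_-}(X_{n+1})$ are nested; since all three are centered at $\muh_{n_0}(X_{n+1})$, this forces $q^*_{P,\muh_{n_0},\alpha_+,\delta_+}(X_{n+1})\le\hat r\le q^*_{P,\muh_{n_0},\alpha_-,\delta_-}(X_{n+1})$, where each endpoint is a supremum of $q^*_{P,\muh_{n_0},\beta}(\Xcal)$ over sets $\Xcal$ with $P_X(\Xcal)\ge\delta_\pm$; note $\delta_\pm\ge\delta/2$ by the hypothesis $\vc(\Xset)\le c\,\delta n_1/\log^2 n_1$ once $c$ is chosen small enough. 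So the task reduces to showing that, \emph{on $E_0$}, for \emph{every} $\Xcal$ with $P_X(\Xcal)\ge\delta/2$ and every $\beta\in\{\alpha_-,\alpha_+\}$,
\[
\big|q^*_{P,\muh_{n_0},\beta}(\Xcal)-q^*_{\eps,\alpha}\big|\ \le\ t\;+\;\frac{|\alpha-\beta|+\eta_{n_0}/(\tfrac{\delta}{2}t^2)}{2\,f_\eps(q^*_{\eps,\alpha/2})},
\]
and then choosing $t=\eta_{n_0}^{1/3}/\delta^{1/2}$ (so that the second summand of the numerator is of order $\eta_{n_0}^{1/3}$), substituting $|\alpha-\alpha_\pm|=c_\alpha\sqrt{\vc(\Xset)\log^2(n_1)/(\delta n_1)}$, and taking the supremum over $\Xcal$ (for the lower side it is enough to use $\Xcal=\R^d\in\Xset$, which avoids the $1/\delta$ factor).

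To prove the displayed inequality, I would use that under~\eqref{eqn:loc_family} we have the pathwise coupling $\big|\,|Y-\muh_{n_0}(X)|-|\eps|\,\big|\le|\Delta(X)|$ with $\eps\perp X$. Conditioning on $X\in\Xcal$ amplifies the second-moment bound on $\Delta$ by a factor $1/P_X(\Xcal)\le2/\delta$, and since $q^*_{P,\muh_{n_0},\beta}(\Xcal)$ is a deterministic function of $\muh_{n_0}$ this step needs no union bound over $\Xcal$ at all (the complexity of $\Xset$ has already been paid for inside Theorem~\ref{thm:Xset_VC_upperbd} via the perturbed levels $\alpha_\pm,\delta_\pm$). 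A standard CDF-shift argument then shows that the $(1-\beta)$-quantile of $|Y-\muh_{n_0}(X)|$ given $X\in\Xcal$ is squeezed between the $(1-\beta\pm p)$-quantiles of $|\eps|$ shifted by $\pm t$, where $p=\eta_{n_0}/(P_X(\Xcal)t^2)$ is the Markov tail bound for $|\Delta(X)|>t$ given $X\in\Xcal$; finally, since $f_\eps$ is symmetric and nonincreasing on $[0,\infty)$, the density of $|\eps|$ over the (small) range of quantile levels that occur is bounded below by $2f_\eps(q^*_{\eps,\alpha/2})$, which converts the quantile-level slack into a value slack and gives the display. Combining the centering and width bounds via the interval-difference inequality, and taking a union bound over the event of Theorem~\ref{thm:Xset_VC_upperbd}, the event $E_0$, and the Markov event for $X_{n+1}$, yields the claim with the stated probability (and one may absorb all numerical factors into the universal constant $c'$).

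The main obstacle, I expect, is the quantile-stability step: one must run the CDF-shift and density-lower-bound argument \emph{uniformly} over the potentially very rich family of sets $\Xcal$ with $P_X(\Xcal)\ge\delta/2$, which only works because---after conditioning on the single good event $E_0$---the resulting bound depends on $\Xcal$ solely through $P_X(\Xcal)$; and one must choose the truncation level $t$ so that the exponents of $\eta_{n_0}$ and $\delta$, and the placement of the density evaluation point $q^*_{\eps,\alpha/2}$, come out exactly as claimed. The location-family assumption is essential here: it is what makes every conditional residual quantile $q^*_{P,\muh_{n_0},\beta}(\Xcal)$ a small perturbation of the single number $q^*_{\eps,\alpha}$, so that the worst-case supremum over $\Xcal$ does not blow up; without~\eqref{eqn:loc_family} these quantiles need not be comparable across $\Xcal$ and the argument fails.
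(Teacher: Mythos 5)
The proposal is correct and follows essentially the same route as the paper: invoke the sandwich of Theorem~\ref{thm:Xset_VC_upperbd}, decompose the symmetric difference into a center shift plus a width (quantile) shift, control the center shift by conditioning on the event of~\eqref{eqn:muh_consistent} and applying Markov at threshold $\eta_{n_0}^{1/3}$, control the width shift uniformly over $\Xcal$ via a Chebyshev-plus-quantile-shift argument (which depends on $\Xcal$ only through $P_X(\Xcal)\geq\delta_-$, so no union bound over $\Xcal$ is needed), and finish with the monotonicity of $f_\eps$ to lower-bound the density over the small range of levels that appear. One small difference: for the inner (lower) quantile the paper uses the one-sided bound $q^*_{P,\muh_{n_0},\alpha_+}(\R^d)\geq q^*_{\eps,\alpha_+}$, which follows directly from the symmetry and unimodality of $f_\eps$ in~\eqref{eqn:loc_family}, whereas your two-sided pathwise-coupling/CDF-shift argument gives the same conclusion up to an extra $O(\eta_{n_0}^{1/3})$ and an extra density-term slack; this is harmless since both are already present in the final bound, so the constants merely shift.
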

\noindent In other words, for a location-family model with a consistent estimate of the true mean function ($\eta_{n_0},\rho_{n_0} \to 0$), the interval \smash{$\Ch_n$} defined in~\eqref{eqn:PI_Xset}
is able to satisfy restricted conditional coverage in the distribution-free setting, while matching the best possible ``oracle'' prediction interval length asymptotically as $n_0,n_1\to \infty$.

\section{Discussion}
In this work, we have explored the possible definitions of approximate conditional coverage for distribution-free predictive inference,
with the goal of finding meaningful definitions that are strong enough to achieve some of the practical benefits of conditional coverage (i.e., patients
feel assured that their personalized predictions have some level of accuracy), but weak enough to still allow for the possibility of meaningful
distribution-free procedures. We find that requiring $(1-\alpha,\delta)$-conditional coverage to hold, i.e., coverage at level $1-\alpha$ over every subgroup
with probability at least $\delta$ within the overall population, is too strong of a condition---our main result establishes a lower bound on the resulting prediction
interval length, and demonstrates that meaningful procedures cannot be constructed with this property. By relaxing the desired property
to $(1-\alpha,\delta,\Xset)$-conditional coverage,
i.e., coverage at level $1-\alpha$ over every subgroup $\Xcal\in\Xset$ that has probability at least $\delta$, we see that sufficiently restricting the class  $\Xset$
does allow for nontrivial prediction intervals. 

Many open questions remain after our preliminary findings. In particular, what types of classes
$\Xset$ are most meaningful for defining this restricted form of approximate conditional coverage? Furthermore, for nearly any class $\Xset$, computation for the split conformal method constructed
in Section~\ref{sec:split_Xset} may pose a serious challenge---how can we efficiently compute predictive intervals for this problem?

Another direction for relaxing $(1-\alpha,\delta)$-CC property is to require it to hold only over some distributions $P$ (rather than
restricting to a class $\Xset$ of sets that we condition on). Is it possible to ensure that conditional coverage at level $1-\alpha$ holds,
not at some uniform tolerance level $\delta$, but at an adaptive tolerance level $\delta(P)$ that is low for ``well-behaved'' distributions $P$
but may be as large as $1$ (i.e., only ensuring marginal coverage) for degenerate distributions $P$? We leave these questions for future work.

\subsection*{Acknowledgements}
The authors are grateful to the American Institute of Mathematics for supporting and hosting our collaboration.
R.F.B.~was partially supported by the National Science Foundation via grant DMS--1654076 and by an Alfred P.~Sloan fellowship. 
E.J.C.~was partially supported by the Office of Naval Research under grant N00014-16-1-2712, by the National Science Foundation via grant DMS--1712800, and by a generous gift from TwoSigma.
R.F.B.~thanks Chao Gao, Samir Khan, and Haoyang Liu for helpful suggestions on an early version of this work.

\appendix
\section{Proof of main impossibility result (Theorem~\ref{thm:main})}
\subsection{A preliminary lemma}

In order to prove our main theorem, we rely on a key lemma:
\begin{lemma}\label{lem:X_to_XY}
Suppose that \smash{$\Ch_n$} satisfies $(1-\alpha,\delta)$-CC as defined in~\eqref{eqn:approx_conditional_cov}. Then for all distributions $P$ where the marginal distribution $P_X$ has no atoms, and for all measurable sets $\Bcal\subseteq \R^d\times\R$ with $P(\Bcal)\geq \delta$, we have
\[\PPst{Y_{n+1}\in\Ch_n(X_{n+1})}{(X_{n+1},Y_{n+1})\in\Bcal} \geq  1-\alpha.\]
\end{lemma}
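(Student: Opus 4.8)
\emph{Proof plan.} The goal is to deduce a guarantee conditional on a set $\Bcal\subseteq\R^d\times\R$ in the joint space from the hypothesis, which only controls coverage conditional on sets in the feature space. My plan has two moves: first, reduce to the case $P(\Bcal)=\delta$ by an averaging argument that uses atomlessness; second, absorb the ``$Y$-part'' of the conditioning into the feature coordinate and invoke the $(1-\alpha,\delta)$-CC hypothesis.

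\emph{Step 1 (reduce to $P(\Bcal)=\delta$).} Suppose the statement is known whenever $P(\Bcal)=\delta$ exactly. Take $\Bcal$ with $P(\Bcal)=p\in(\delta,1]$. Since $P_X$ has no atoms, neither does $P$, so $P$ restricted to $\Bcal$ is atomless; identify $(\Bcal,P|_\Bcal)$ with Lebesgue measure on the circle $\R/p\Z$ via a measurable isomorphism, and for $t\in[0,p)$ let $\Bcal_t\subseteq\Bcal$ be the preimage of the arc $[t,t+\delta)$, so $P(\Bcal_t)=\delta$. Applying the assumed case to each $\Bcal_t$ gives $\PP{Y_{n+1}\notin\Ch_n(X_{n+1}),\,(X_{n+1},Y_{n+1})\in\Bcal_t}\leq\alpha\delta$. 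Integrating over $t\in[0,p)$ and using Fubini---each $z\in\Bcal$ lies in $\Bcal_t$ for a set of $t$'s of length exactly $\delta$, and no $z\notin\Bcal$ lies in any $\Bcal_t$---yields $\delta\cdot\PP{Y_{n+1}\notin\Ch_n(X_{n+1}),\,(X_{n+1},Y_{n+1})\in\Bcal}\leq\alpha\delta p$, i.e.\ the desired bound $\PPst{Y_{n+1}\in\Ch_n(X_{n+1})}{(X_{n+1},Y_{n+1})\in\Bcal}\geq 1-\alpha$.

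\emph{Step 2 (fold $Y$ into the feature).} Now let $P(\Bcal)=\delta$. Using a noise-outsourcing representation, write $Y=r(X,U)$ with $U\sim\mathrm{Unif}[0,1]$ independent of $X$, so that $\{(X,Y)\in\Bcal\}=\{U\in S(X)\}$ where $S(x)=\{u:(x,r(x,u))\in\Bcal\}$. Because $P_X$ is atomless, the feature space has ``room'' to carry this extra uniform coordinate: fix a Borel isomorphism $\Xi:\R^d\times[0,1]\to\R^d$, set $\widetilde X=\Xi(X,U)$, and let $\Pt$ be the law of $(\widetilde X,Y)$ on $\R^d\times\R$. Then $\Pt_X$ is again atomless, and $\{(X,Y)\in\Bcal\}$ becomes the feature event $\{\widetilde X\in\Xcal^*\}$ with $\Xcal^*=\Xi(\{(x,u):u\in S(x)\})$ satisfying $\Pt_X(\Xcal^*)=P(\Bcal)=\delta$. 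Feeding $\Pt$ and $\Xcal^*$ into the $(1-\alpha,\delta)$-CC hypothesis then produces coverage at level $1-\alpha$ conditional on $\{(X,Y)\in\Bcal\}$ for this transformed problem.

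\emph{Main obstacle.} The delicate point is the \emph{transfer} back: applied to $\Pt$, the hypothesis speaks about $\Ch_n$ evaluated on the re-parametrized features $\widetilde X_i=\Xi(X_i,U_i)$ and the corresponding data law, whereas the lemma asks about $\Ch_n$ evaluated on the original $X_i$ drawn from $P$. Since $\Ch_n$ is an arbitrary black box, making the conclusion about $\Pt$ actually certify the original statement requires arranging the construction so that what $\Ch_n$ ``sees'' is distributionally unchanged---equivalently, so that the extra randomness $U$ is genuinely absorbed by the feature coordinate without altering the relevant joint law. This is precisely where the atomlessness of $P_X$ is indispensable, it being the only assumption in play beyond $(1-\alpha,\delta)$-CC itself, and I expect verifying this consistency (together with the standard measure-theoretic facts underlying the noise-outsourcing representation, the circle identification of Step 1, and the Borel isomorphism of Step 2) to constitute the bulk of the proof.
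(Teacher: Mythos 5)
Your intuition---that conditioning on a joint-space event $\Bcal$ can be converted into conditioning on a feature-space event by packing the $Y$-information into the feature coordinate, using atomlessness to make this possible---is exactly the right one. But the implementation in your Step 2 has a genuine gap, which you yourself flag as ``the main obstacle'' and then leave unresolved. It is not a verification detail. Once you replace $X$ by $\widetilde X=\Xi(X,U)$ and let $\Pt$ be the law of $(\widetilde X,Y)$, applying the $(1-\alpha,\delta)$-CC hypothesis to $\Pt$ yields a statement about $\Ch_n$ fitted on $(\widetilde X_1,Y_1),\dots,(\widetilde X_n,Y_n)$ and evaluated at $\widetilde X_{n+1}$. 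Since $\Ch_n$ is an arbitrary black box acting on actual data values, this says nothing a priori about $\Ch_n$ fitted on $(X_1,Y_1),\dots,(X_n,Y_n)$ and evaluated at $X_{n+1}$, which is what the lemma concerns. And there is no way to pick $\Xi$ so that $\Pt=P$: for $\widetilde X$ to determine $\Bcal$-membership it must carry strictly more information than $X$, so the re-parametrized feature marginal cannot equal $P_X$. What is missing is the \emph{mechanism} for the transfer back, not measure-theoretic bookkeeping.

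The paper's proof avoids the issue entirely by never re-parametrizing the feature space. Instead, for large $M$ it builds a discrete distribution $Q(\Lcal)$ on the original space $\R^d\times\R$, supported on $M$ points drawn from $P$ conditional on $\Bcal$ and $M$ points drawn from $P$ conditional on $\Bcal^c$, weighted so that $\Pp{Q(\Lcal)}{\Bcal}=\delta'\geq\delta$. Because $P_X$ has no atoms, the $2M$ sampled $X$-coordinates are almost surely distinct, so within the support of $Q(\Lcal)$ the $X$-coordinate alone determines the pair $(X,Y)$ and hence $\Bcal$-membership; the set $\Xcal(\Lcal)=\{X_1^{(1)},\dots,X_1^{(M)}\}$ is the desired feature-space set, and it lives in $\R^d$. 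Applying $(1-\alpha,\delta)$-CC to $Q(\Lcal)$ and $\Xcal(\Lcal)$ bounds the miscoverage when the data are drawn i.i.d.\ from $Q(\Lcal)$, and a with-versus-without-replacement total-variation bound of $n^2/M$ connects this to data drawn from $P$ after averaging over $\Lcal$; sending $M\to\infty$ finishes. The decisive feature of this route is that $\Ch_n$ always sees genuine $(X_i,Y_i)$ pairs in $\R^d\times\R$---only the distribution from which they are drawn changes, and that change is quantitatively controlled. Incidentally, your Step 1 reduction to $P(\Bcal)=\delta$ via the circle-averaging argument is correct but unnecessary; the paper handles $P(\Bcal)=\delta'\geq\delta$ directly since $\Xcal(\Lcal)$ automatically has $Q(\Lcal)$-probability $\delta'$.
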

\noindent Comparing this lemma to the definition of $(1-\alpha,\delta)$-CC, we see that the definition of approximate conditional coverage requires that the result of the lemma must hold for any set of the form $\Bcal = \Xcal\times\R$, i.e., conditioning on an event $X_{n+1}\in\Xcal$ (with probability at least $\delta$). The lemma extends the property to condition also on events that are defined jointly in $(X,Y)$. 

While this may initially appear to be a simple extension of the definition of $(1-\alpha,\delta)$-CC, the proof is not trivial, and the implications of this result are very significant. To see why, suppose that we construct $\Bcal$ to consist only of points $(x,y)$ such that $Y_{n+1}=y$ is in the extreme tail of its conditional distribution given $X_{n+1}=x$---specifically, outside the range given by the $\delta/2$ and $1-\delta/2$ conditional quantiles (so that the overall probability of $\Bcal$ is large enough, i.e., $\geq \delta$). The lemma claims that, even when $(X_{n+1},Y_{n+1})$ lands in this set, i.e., $Y_{n+1}$ is in the extreme tails of its conditional distribution given $X_{n+1}$, this value $Y_{n+1}$ is still quite likely to lie in \smash{$\Ch_n(X_{n+1})$}. This implies that \smash{$\Ch_n(X_{n+1})$} must indeed be very wide.

We will next formalize this intuition to prove our theorem.

\subsection{Proof of Theorem~\ref{thm:main}}

First, for each $x\in\R^d$ and each $s\in[0,1]$, define
\[C_{P,s}(x) = \left\{y : \PP{y\in\Ch_n(x)}> s\right\},\]
where the probability is taken with respect to the training data. Note that $C_{P,s}(x)$ is fixed, since it is defined as a function of the {\em distribution} of \smash{$\Ch_n(x)$}, not of the random interval \smash{$\Ch_n(x)$} itself. 

Next, for any fixed $x$, in expectation over the training data we have
\[\EE{\leb\big(\Ch_n(x)\big)} = \EE{\int_{y\in\R}\One{y\in\Ch_n(x)}\;\mathsf{d}y} = \int_{y\in\R}\PP{y\in\Ch_n(x)}\;\mathsf{d}y,\]
by Fubini's theorem. Now, we can rewrite
\[\PP{y\in\Ch_n(x)} = \int_{s=0}^1 \One{\PP{y\in\Ch_n(x)} > s}\;\mathsf{d}s =  \int_{s=0}^1\One{y\in C_{P,s}(x)}\;\mathsf{d}s,\]
and so plugging this in and applying Fubini's theorem again,
\[
\EE{\leb\big(\Ch_n(x)\big)}
= \int_{s=0}^1 \int_{y\in\R}\One{y\in C_{P,s}(x)}\;\mathsf{d}y\;\mathsf{d}s\\
=\int_{s=0}^1\leb\big(C_{P,s}(x)\big)\;\mathsf{d}s.
\]

Next, plugging in the test point $X_{n+1}$, and applying Fubini's theorem an additional time,
\begin{multline}\label{eqn:thm_step1}
\EE{\leb\big(\Ch_n(X_{n+1})\big)}
=\EE{\EEst{\leb\big(\Ch_n(X_{n+1})\big)}{X_{n+1}}}
=\EE{\int_{s=0}^1\leb\big(C_{P,s}(X_{n+1})\big)\;\mathsf{d}s}\\
=\int_{s=0}^1\EE{\leb\big(C_{P,s}(X_{n+1})\big)}\;\mathsf{d}s=\int_{s=0}^1\Ep{P_X}{\leb\big(C_{P,s}(X)\big)}\;\mathsf{d}s,
\end{multline}
where the last step holds since marginally $X_{n+1}\sim P_X$.

Next we define
\[\alpha_s = \Pp{P}{Y\not\in C_{P,s}(X)},\]
the marginal miscoverage rate  of the sets $C_{P,s}(x)$ (that is, we think of $C_{P,s}(x)$ as a deterministic prediction interval).
Then 
\begin{equation}\label{eqn:alpha_s_L_P}\Ep{P_X}{\leb\big(C_{P,s}(X)\big)}\geq L_P(1-\alpha_s)\end{equation}
 by the definition  of the minimal prediction interval length $L_P$ given in~\eqref{eqn:def_L_P}.
Since $s\mapsto \alpha_s$ is nondecreasing and right-continuous, and satisfies $\alpha_1 = 1$, we can define
\[s_\star = \min\{s\in[0,1] : \alpha_s \geq\delta\}.\]
Define also
\[\Bcal_+ = \left\{(x,y) : \PP{y\in\Ch_n(x)} \leq s_\star\right\}\textnormal{ and }
\Bcal_- = \left\{(x,y) : \PP{y\in\Ch_n(x)} < s_\star\right\}.\]
Then
\[\Pp{P}{(X,Y)\in\Bcal_+} =  \alpha_{s_\star} \geq \delta\textnormal{ and }
\Pp{P}{(X,Y)\in\Bcal_-}=\sup_{s<s_\star}\alpha_s\leq \delta.\]
Now, since $P$ is assumed to have no atoms (inheriting this property from the marginal $P_X$),
 by \citet[Proposition A.1]{dudley2011concrete} we can find a  measurable
 set $\Bcal$ such that
\[\Bcal_- \subseteq \Bcal \subseteq \Bcal_+\textnormal{ and }\Pp{P}{(X,Y)\in\Bcal} = \delta.\]
By definition of $\Bcal$, we have 
\begin{equation}\label{eqn:Bcal_implies}
\begin{array}{l}
(x,y)\in\Bcal \ \Rightarrow \ \PP{y\in\Ch_n(x)} \leq  s_\star, \\
(x,y)\not\in\Bcal \ \Rightarrow \ \PP{y\in\Ch_n(x)} \geq s_\star .\end{array}\end{equation}
Next, we can calculate
\begin{align*}
&\int_{s=0}^{s_\star} \alpha_s\;\mathsf{d}s 
= s_\star - \int_{s=0}^{s_\star} (1-\alpha_s)\;\mathsf{d}s\\
&=s_\star - \int_{s=0}^{s_\star} \Pp{P}{Y\in C_{P,s}(X)}\;\mathsf{d}s \\
&=s_\star - \int_{s=0}^{s_\star} \Pp{P}{\PPst{Y\in\Ch_n(X)}{X,Y}>s}\;\mathsf{d}s \\
&=s_\star - \int_{s=0}^1 \Pp{P}{\PPst{Y\in\Ch_n(X)}{X,Y} \wedge s_\star >s}\;\mathsf{d}s \\
&=s_\star - \Ep{P}{\PPst{Y\in\Ch_n(X)}{X,Y} \wedge s_\star} \\
&=s_\star - \left(\Ep{P}{\PPst{Y\in\Ch_n(X)}{X,Y} \cdot\One{(X,Y)\in\Bcal}} + \Ep{P}{s_\star\cdot\One{(X,Y)\not\in\Bcal}} \right)\\
&=s_\star - \PP{Y_{n+1}\in\Ch_n(X_{n+1}),(X_{n+1},Y_{n+1})\in\Bcal}  - s_\star \Pp{P}{(X,Y)\not\in\Bcal} \\
&=\delta \left(s_\star - \PPst{Y_{n+1}\in\Ch_n(X_{n+1})}{(X_{n+1},Y_{n+1})\in\Bcal} \right),
\end{align*}
where the last step holds since $\Pp{P}{(X,Y)\in\Bcal}=\PP{(X_{n+1},Y_{n+1})\in\Bcal}=\delta$ by construction. Next, by applying Lemma~\ref{lem:X_to_XY} to the set $\Bcal$, we have
\[ \PPst{Y_{n+1}\in\Ch_n(X_{n+1})}{(X_{n+1},Y_{n+1})\in\Bcal} \geq 1-\alpha\]
and therefore
\begin{equation}\label{eqn:thm_step2}\int_{s=0}^{s_\star} \alpha_s\;\mathsf{d}s  \leq \delta \left(s_\star - (1-\alpha) \right).\end{equation}
In particular, since the left-hand side is nonnegative, this proves that we must have $s_\star \geq 1- \alpha>0$ (we can assume that $\alpha<1$ since otherwise the theorem holds trivially).

Now, returning to~\eqref{eqn:thm_step1} and~\eqref{eqn:alpha_s_L_P}, we have
\begin{multline}\label{eqn:thm_step3}
\EE{\leb\big(\Ch_n(X_{n+1})\big)}
\geq \int_{s=0}^1 L_P(1-\alpha_s)\;\mathsf{d}s
\geq \int_{s=0}^{s_\star} L_P(1-\alpha_s)\;\mathsf{d}s\\
= s_\star \int_{s=0}^{s_\star} \frac{1}{s_\star} L_P(1-\alpha_s)\;\mathsf{d}s
\geq s_\star L_P\left(1-\int_{s=0}^{s_\star} \frac{1}{s_\star} \alpha_s\;\mathsf{d}s\right),
\end{multline}
where the last step uses Jensen's inequality, together with the fact that $\alpha\mapsto L_P(1-\alpha)$ is convex.
(To verify this,
let $C_P\in \Ccal_P(1-\alpha)$ and $C_P'\in\Ccal_P(1-\alpha')$, and then define $C_P''(x)$ as the random interval that outputs
$C_P(x)$ with probability $(1-t)$ and $C_P'(x)$ with probability $t$. Then it is easy to verify that $C_P''\in \Ccal_P(1-\alpha'')$ where $\alpha''=(1-t)\alpha+ t\alpha'$,
and that $\Ep{P_X}{\leb(C_P''(X))} = (1-t)\Ep{P_X}{\leb(C_P(X))}+t\Ep{P_X}{\leb(C_P''(X))}$. This is sufficient to establish convexity.)

Combining~\eqref{eqn:thm_step2} and~\eqref{eqn:thm_step3}, we obtain
\[\EE{\leb\big(\Ch_n(X_{n+1})\big)} \geq s_\star L_P\left(1-\delta \left(1 - \frac{1-\alpha}{s_\star}\right)\right),\]
since $L_P$ is nondecreasing. Finally, define
\[c = \frac{1}{\alpha} - \frac{1-\alpha}{s_\star\alpha}.\]
Since we have verified that $1-\alpha \leq s_\star \leq 1$, this means that $c\in[0,1]$, and
plugging in this choice of $c$, we obtain
\[\EE{\leb\big(\Ch_n(X_{n+1})\big)} \geq   \frac{1-\alpha}{1-c\alpha}L_P(1-c\alpha\delta),\]
which proves the theorem.
\subsection{Proof of Lemma~\ref{lem:X_to_XY}} 
Let $\delta' = \Pp{P}{(X,Y)\in\Bcal}\geq\delta$. We will assume that $\delta'<1$ (since the case $\delta'=1$ is trivial).
Fix a large integer $M\geq n+1$. First, draw $M$ data points $(X_0^{(1)},Y_0^{(1)}),\dots,(X_0^{(M)},Y_0^{(M)})$  i.i.d.~from $(X,Y)\sim P$ conditional on $(X,Y)\not\in\Bcal$, and $M$ additional data points $(X_1^{(1)},Y_1^{(1)}),\dots,(X_1^{(M)},Y_1^{(M)})$ i.i.d.~from $(X,Y)\sim P$ conditional on $(X,Y)\in\Bcal$. Let $\Lcal$ denote this draw of the $2M$ data points. Since $P_X$ has no atoms, with probability $1$ all the $X_0^{(i)}$'s and $X_1^{(i)}$'s are distinct, so from this point on we assume that this is true.

Next suppose that we draw indices $m_1,\dots,m_{n+1}$ without replacement from the set $\{1,\dots,M\}$. Independently for each $i=1,\dots,n+1$, set
\begin{equation}\label{eqn:Lcal_sample}(X_i,Y_i) = \begin{cases}(X_0^{(m_i)},Y_0^{(m_i)}),&\textnormal{ with probability $1-\delta'$,}\\(X_1^{(m_i)},Y_1^{(m_i)}),&\textnormal{ with probability $\delta'$.}\end{cases}\end{equation}
We can clearly see that, after marginalizing over $\Lcal$, this is equivalent to drawing the data points $(X_i,Y_i)$ i.i.d.~from $P$. Therefore, we have
\begin{multline*}\PP{Y_{n+1}\not\in\Ch_n(X_{n+1}),(X_{n+1},Y_{n+1})\in\Bcal} \\
=\EE{\PPst{Y_{n+1}\not\in\Ch_n(X_{n+1}),(X_{n+1},Y_{n+1})\in\Bcal}{\Lcal}},\end{multline*}
where, on the right-hand side, after conditioning on $\Lcal$, the data points $(X_i,Y_i)$ are drawn according to~\eqref{eqn:Lcal_sample}.

Next consider an alternate distribution where we  draw the $n+1$ data points $(X_i,Y_i)$ from $\Lcal$ but now drawing {\em with} replacement. Specifically, fixing $\Lcal$, let $Q(\Lcal)$ be the 
discrete distribution that places probability $\frac{1-\delta'}{M}$ on each point $(X_0^{(m)},Y_0^{(m)})$, and probability $\frac{\delta'}{M}$ on each point $(X_1^{(m)},Y_1^{(m)})$, for $m=1,\dots,M$.
The product distribution $\big(Q(\Lcal)\big)^{n+1}$ is therefore equivalent to sampling indices $m_1,\dots,m_{n+1}$ {\em with} replacement from the set $\{1,\dots,M\}$, and then defining $(X_i,Y_i)$ again according to~\eqref{eqn:Lcal_sample}.

Now, if $M$ is very large relative to $n$, it is extremely unlikely that we would have $m_i=m_{i'}$ for any $i\neq i'$, when drawing from $\big(Q(\Lcal)\big)^{n+1}$. Specifically, we can easily check that this probability is bounded by $\frac{n^2}{M}$, and so for any fixed $\Lcal$, the total variation distance between the distribution
given in~\eqref{eqn:Lcal_sample} (i.e., sampling without replacement) and the distribution $(Q(\Lcal))^{n+1}$ (i.e., sampling with replacement)
is bounded by $\frac{n^2}{M}$. Therefore, 
\begin{multline*}
\PPst{Y_{n+1}\not\in\Ch_n(X_{n+1}),(X_{n+1},Y_{n+1})\in\Bcal}{\Lcal}\\
\leq \Pp{(Q(\Lcal))^{n+1}}{Y_{n+1}\not\in\Ch_n(X_{n+1}),(X_{n+1},Y_{n+1})\in\Bcal}+ \frac{n^2}{M},\end{multline*}
where on the left-hand side, after conditioning on $\Lcal$, the data points $(X_i,Y_i)$ are drawn according to~\eqref{eqn:Lcal_sample}.

Next, for any $\Lcal$, define the set
\[\Xcal(\Lcal) = \{X_1^{(1)},\dots,X_1^{(M)}\}.\]
Note that, for $(X,Y)\sim Q(\Lcal)$, by construction we have $X\in\Xcal(\Lcal)$ if and only if $(X,Y)\in\Bcal$ (since we have assumed that $\Lcal$ is chosen
so that $X_0^{(1)},\dots,X_0^{(M)},X_1^{(1)},\dots,X_1^{(M)}$ are all distinct), and
\[\Pp{Q(\Lcal)}{X\in\Xcal(\Lcal)} =\Pp{Q(\Lcal)}{(X,Y)\in\Bcal} = \delta'\geq \delta.\]
Therefore, since \smash{$\Ch_n$} satisfies $(1-\alpha,\delta)$-CC with respect to any distribution, we must have
\begin{multline*}\Pp{(Q(\Lcal))^{n+1}}{Y_{n+1}\not\in\Ch_n(X_{n+1}),(X_{n+1},Y_{n+1})\in\Bcal} \\= \Pp{(Q(\Lcal))^{n+1}}{Y_{n+1}\not\in\Ch_n(X_{n+1}),X_{n+1}\in\Xcal(\Lcal)}\\
= \Ppst{(Q(\Lcal))^{n+1}}{Y_{n+1}\not\in\Ch_n(X_{n+1})}{X_{n+1}\in\Xcal(\Lcal)} \cdot \delta'\leq \alpha\delta'\end{multline*}
 for every fixed $\Lcal$ where $X_0^{(1)},\dots,X_0^{(M)},X_1^{(1)},\dots,X_1^{(M)}$ are distinct.
 Combining everything, therefore,
\begin{multline*}\PP{Y_{n+1}\not\in\Ch_n(X_{n+1}),(X_{n+1},Y_{n+1})\in\Bcal}\\
\leq \EE{\Pp{(Q(\Lcal))^{n+1}}{Y_{n+1}\not\in\Ch_n(X_{n+1}),(X_{n+1},Y_{n+1})\in\Bcal} + \frac{n^2}{M}}
\leq \alpha\delta' + \frac{n^2}{M},
\end{multline*}
where the expectation is taken with respect to the random draw of $\Lcal$.
Since $M$ can be taken to be arbitrarily large, we therefore have
\[\PP{Y_{n+1}\not\in\Ch_n(X_{n+1}),(X_{n+1},Y_{n+1})\in\Bcal} \leq \alpha\delta' = \alpha\cdot \PP{(X_{n+1},Y_{n+1})\in\Bcal},\]
which concludes the proof of the lemma.

\section{Additional proofs}

\subsection{Proof of Lemma~\ref{lem:alphadelta_MC_extend}}\label{app:proof_lem:alphadelta_MC_extend}
Let $A\sim\textnormal{Bernoulli}\left(\frac{1-\alpha}{1-c\alpha}\right)$ be the Bernoulli variable indicating whether $\Ch'_n(x)$ is defined as \smash{$\Ch_n(x)$} (if $A=1$) or as the empty set (if $A=0$). Then, for any $\Xcal$ with $P_X(\Xcal)\geq \delta$, we have
\begin{multline*}\PPst{Y_{n+1}\in \Ch'_n(X_{n+1})}{X_{n+1}\in\Xcal} = \PPst{A=1,Y_{n+1}\in \Ch_n(X_{n+1})}{X_{n+1}\in\Xcal}\\=\frac{1-\alpha}{1-c\alpha}\cdot \PPst{Y_{n+1}\in \Ch_n(X_{n+1})}{X_{n+1}\in\Xcal}\geq \frac{1-\alpha}{1-c\alpha}\cdot(1-c\alpha) = 1-\alpha,\end{multline*}
where the inequality holds since \smash{$\Ch_n$} satisfies $(1-c\alpha,\delta)$-CC by Lemma~\ref{lem:alphadelta_MC}.

\subsection{Proof of Theorem~\ref{thm:Xset}}
Fix any distribution $P$ and any $\Xcal\in\Xset$ with $P_X(\Xcal) \geq \delta$. Let
\[R_{n+1} = \big|Y_{n+1}-\muh_{n_0}(X_{n+1})\big|\]
be the residual of the test point. By definition of the procedure, we can see that
\begin{multline*}
\PPst{Y_{n+1}\not\in\Ch_n(X_{n+1})}{X_{n+1}\in\Xcal} = \PPst{R_{n+1}>\qh_{n_1}(X_{n+1})}{X_{n+1}\in\Xcal} \\
\leq \PPst{\Xcal\not\in\widehat{\Xset}_{n_1}}{X_{n+1}\in\Xcal} + \PPst{R_{n+1}>\qh_{n_1}(\Xcal)}{X_{n+1}\in\Xcal}.\end{multline*}
The first probability depends only on the held-out portion of the training data, i.e., data points $i=n_0+1,\dots,n$. We have
\[\Xcal\not\in\widehat{\Xset}_{n_1} \ \Rightarrow \ \sum_{i=n_0+1}^n \One{X_i\in\Xcal} <  \delta n_1\left(1  - \sqrt{\frac{2\log(n_1)}{\delta n_1}}\right).\]
Since each $X_i$ has probability at least $\delta$ of lying in $\Xcal$, therefore this probability is bounded by 
\[\PP{\textnormal{Binomial}(n_1,\delta) <    \delta n_1\left(1  - \sqrt{\frac{2\log(n_1)}{\delta n_1}}\right)} \leq  \frac{1}{n_1},\]
where the inequality holds by the multiplicative Chernoff bound. Therefore, what we have so far is
\[\PPst{Y_{n+1}\not\in\Ch_n(X_{n+1})}{X_{n+1}\in\Xcal}  \leq \frac{1}{n_1} + \PPst{R_{n+1}>\qh_{n_1}(\Xcal)}{X_{n+1}\in\Xcal}.\]
Next let $I=\{i:n_0+1\leq i\leq n, X_i\in\Xcal\}$. Then $|I|=\widehat{N}_{n_1}(\Xcal)$, and by definition of $\qh_{n_1}(\Xcal)$, we see that $R_{n+1}>\qh_{n_1}(\Xcal)$ if and only if $R_{n+1}$ is not one of the $\left\lceil\left(1-\alpha+\frac{1}{n_1}\right)\cdot (|I|+1)\right\rceil$ smallest
values of $\{R_i: i\in I\cup\{n+1\}\}$. Now, after conditioning on $I$ and on the event $X_{n+1}\in\Xcal$, by distribution of the data we see that these residuals are exchangeable.
Therefore this event has probability exactly 
\[1 - \frac{\left\lceil\left(1-\alpha+\frac{1}{n_1}\right)\cdot (|I|+1)\right\rceil}{|I|+1} \leq \alpha -\frac{1}{n_1} \]
 after conditioning on $I$ and on the event that $X_{n+1}\in\Xcal$. This bound is therefore true also after marginalizing over $I$, and so
$\PPst{R_{n+1}>\qh_{n_1}(\Xcal)}{X_{n+1}\in\Xcal}\leq \alpha - \frac{1}{n_1}$, which concludes the proof.

\subsection{Proof of Theorem~\ref{thm:Xset_VC_lowerbd}}

First, we need to show that Lemma~\ref{lem:X_to_XY} holds in this setting. 
\begin{lemma}\label{lem:X_to_XY_Xset}
Suppose that \smash{$\Ch_n$} satisfies $(1-\alpha,\delta,\Xset)$-CCE as defined in~\eqref{eqn:approx_conditional_cov_Xset_exch},
where $\Xset$ satisfies $\vcae(\Xset)\geq 2n+2$. Then for all distributions $P$ where the marginal distribution $P_X$ is continuous with respect to Lebesgue measure, for all $\Bcal\subseteq \R^d\times\R$ with $\Pp{P}{(X,Y)\in\Bcal}\geq \delta$,
\[\PPst{Y_{n+1}\in\Ch_n(X_{n+1})}{(X_{n+1},Y_{n+1})\in\Bcal} \geq1- \alpha.\]
\end{lemma}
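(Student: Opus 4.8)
The plan is to follow the proof of Lemma~\ref{lem:X_to_XY} closely, but to replace its ``sample \emph{with} replacement from a pool of size $2M$, then let $M\to\infty$'' device --- which was forced there because the $(1-\alpha,\delta)$-CC property is only postulated for i.i.d.\ data --- with a ``sample \emph{without} replacement from a pool of size exactly $2n+2$, then relabel the $n+1$ chosen points uniformly at random'' device. The latter produces an \emph{exchangeable} (rather than i.i.d.) coupling, so it can only be fed into the stronger $(1-\alpha,\delta,\Xset)$-CCE hypothesis; in exchange, the pool stays small enough that the assumption $\vcae(\Xset)\geq 2n+2$ suffices to carry out the key set-selection step.

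Concretely, set $\delta'=\Pp{P}{(X,Y)\in\Bcal}\in[\delta,1]$ and dispose of the trivial case $\delta'=1$ (where the claim is immediate from the CCE property with $\Xcal=\R^d$). The construction is as follows: draw a pool $\Lcal$ consisting of $n+1$ i.i.d.\ points from $P(\cdot\mid\Bcal)$ (the ``inside'' reserve) and $n+1$ i.i.d.\ points from $P(\cdot\mid\Bcal^c)$ (the ``outside'' reserve); independently draw $K\sim\textnormal{Binomial}(n+1,\delta')$; pick a uniformly random $K$-subset of the inside reserve together with a uniformly random $(n+1-K)$-subset of the outside reserve; and distribute these $n+1$ chosen points among slots $1,\dots,n+1$ via a uniformly random bijection to obtain $(X_1,Y_1),\dots,(X_{n+1},Y_{n+1})$. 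Two observations drive the argument. First, after marginalizing over $\Lcal$, over $K$, and over the bijection, $(X_1,Y_1),\dots,(X_{n+1},Y_{n+1})$ is exactly i.i.d.\ $P^{n+1}$ --- this is just the standard decomposition of an i.i.d.\ sample according to how many of its coordinates fall in $\Bcal$. Second, \emph{conditionally on $\Lcal$ alone}, the law $\Pt=\Pt(\Lcal)$ of $(X_1,Y_1),\dots,(X_{n+1},Y_{n+1})$ is exchangeable (thanks to the final uniform relabeling), and $K$ stays random under $\Pt$ --- which is precisely why the pool must carry a full reserve of $2n+2$ points drawn independently of $K$, rather than only $n+1$ points.

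Now condition on $\Lcal$. Since $P_X$, and hence the $X$-marginals of $P(\cdot\mid\Bcal)$ and $P(\cdot\mid\Bcal^c)$, are continuous with respect to Lebesgue measure, the $2n+2$ feature vectors in $\Lcal$ are almost surely distinct and, by $\vcae(\Xset)\geq 2n+2$, almost surely form a set shattered by $\Xset$ (the exceptional configurations forming a Lebesgue-null, hence probability-zero, subset of $(\R^d)^{2n+2}$); so there is $\Xcal=\Xcal(\Lcal)\in\Xset$ whose intersection with these $2n+2$ feature vectors is exactly the set of feature vectors of the inside reserve. Under $\Pt$ the event $\{X_{n+1}\in\Xcal\}$ then coincides with $\{(X_{n+1},Y_{n+1})\in\Bcal\}$ (the point placed in slot $n+1$ lies in $\Xcal$ iff it is an inside point iff it lies in $\Bcal$), and $\Pp{\Pt}{X_{n+1}\in\Xcal}=\EE{K}/(n+1)=\delta'\geq\delta$ --- crucially this probability is $\delta'$, not the random $K/(n+1)$, because under $\Pt$ we average over $K$. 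Applying $(1-\alpha,\delta,\Xset)$-CCE to the exchangeable law $\Pt$ with this $\Xcal$ gives $\Ppst{\Pt}{Y_{n+1}\in\Ch_n(X_{n+1})}{X_{n+1}\in\Xcal}\geq1-\alpha$, i.e.\ $\Pp{\Pt}{Y_{n+1}\notin\Ch_n(X_{n+1}),\,(X_{n+1},Y_{n+1})\in\Bcal}\leq\alpha\delta'$. Taking expectation over the draw of $\Lcal$ and using the first observation yields $\PP{Y_{n+1}\notin\Ch_n(X_{n+1}),\,(X_{n+1},Y_{n+1})\in\Bcal}\leq\alpha\delta'=\alpha\,\PP{(X_{n+1},Y_{n+1})\in\Bcal}$, which rearranges to the stated bound. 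The main thing to get right is the first observation together with the identity $\Pp{\Pt}{X_{n+1}\in\Xcal(\Lcal)}=\delta'$: these are what permit a pool of size only $2n+2$, and they are also the reason the result is stated for the CCE property (so exchangeable couplings are admissible) with the clean threshold $\vcae(\Xset)\geq 2n+2$, whereas $(1-\alpha,\delta,\Xset)$-CC would force us back to the with-replacement device and a much larger pool.
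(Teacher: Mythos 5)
Your proof is correct and mirrors the paper's argument: both build a pool $\Lcal$ of $n+1$ i.i.d.\ draws from $P(\cdot\mid\Bcal)$ and $n+1$ from $P(\cdot\mid\Bcal^c)$, use $\vcae(\Xset)\geq 2n+2$ plus continuity of $P_X$ to find $\Xcal(\Lcal)\in\Xset$ separating the two reserves, apply the CCE hypothesis to the exchangeable law $\widetilde{P}(\Lcal)$ (noting $\Pp{\widetilde{P}(\Lcal)}{X_{n+1}\in\Xcal(\Lcal)}=\delta'\geq\delta$), and integrate over $\Lcal$. The only difference is cosmetic: the paper specifies the exchangeable coupling via a uniform permutation $\pi$ together with i.i.d.\ Bernoulli$(\delta')$ switches between paired pool points $(X_0^{(\pi_i)},\cdot)$ and $(X_1^{(\pi_i)},\cdot)$, whereas you use a Binomial$(n+1,\delta')$ count $K$, uniform $K$- and $(n+1-K)$-subsets of the two reserves, and a uniform bijection to slots; both yield exactly $P^{n+1}$ after marginalizing over $\Lcal$ and are exchangeable given $\Lcal$.
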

With this lemma in place, the proof of Theorem~\ref{thm:Xset_VC_lowerbd} follows exactly as the proof of our initial result, Theorem~\ref{thm:main}.
We now turn to proving the lemma.

\begin{proof}[Proof of Lemma~\ref{lem:X_to_XY_Xset}]
The proof of this lemma is similar to that of Lemma~\ref{lem:X_to_XY}, except that instead of taking $M$ samples from $\Bcal$ and from $\Bcal^c$
for an arbitrarily large integer $M$, we only need to take $n+1$ from each set.

Let $\delta' = \Pp{P}{(X,Y)\in\Bcal}\geq\delta$.  We can assume that $\delta'<1$ (otherwise, the bound claimed in the lemma is trivial).
Draw $n+1$ data points $(X_0^{(1)},Y_0^{(1)}),\dots,(X_0^{(n+1)},Y_0^{(n+1)})$  i.i.d.~from $(X,Y)\sim P$ conditional on $(X,Y)\not\in\Bcal$, and $n+1$ additional data points $(X_1^{(1)},Y_1^{(1)}),\dots,(X_1^{(n+1)},Y_1^{(n+1)})$ i.i.d.~from $(X,Y)\sim P$ conditional on $(X,Y)\in\Bcal$. Let $\Lcal$ denote this draw of the $2n+2$ data points. 

Next, we draw a permutation $\pi$ of the set $\{1,\dots,n+1\}$ uniformly at random, and draw $B_1,\dots,B_{n+1}\iidsim\textnormal{Bernoulli}(\delta')$
independently of all other random variables.
Define
\[(X_i,Y_i) =  \begin{cases}(X_0^{(\pi_i)},Y_0^{(\pi_i)}),&\textnormal{ if $B_i=0$,}\\(X_1^{(\pi_i)},Y_1^{(\pi_i)}),&\textnormal{ if $B_i=1$.}\end{cases}.\]
We can clearly see that, after marginalizing over $\Lcal$, this is equivalent to drawing the data points $(X_i,Y_i)$ i.i.d.~from $P$. Therefore, we have
\begin{multline}\label{eqn:Lcal_tower}\PP{Y_{n+1}\not\in\Ch_n(X_{n+1}),(X_{n+1},Y_{n+1})\in\Bcal} \\
=\EE{\PPst{Y_{n+1}\not\in\Ch_n(X_{n+1}),(X_{n+1},Y_{n+1})\in\Bcal}{\Lcal}},\end{multline}
where, on the right-hand side, after conditioning on $\Lcal$, the data points $(X_i,Y_i)$ are defined by the permutation $\pi$ and the Bernoulli variables 
$B_1,\dots,B_{n+1}$. 

Next consider the distribution of the data conditional on $\Lcal$, which we denote by $\tilde{P}(\Lcal)$. 
Since the permutation $\pi$ is drawn uniformly at random, and the $B_i$'s are i.i.d.,
 it is clear that the $n+1$ data points $(X_1,Y_1),\dots,(X_{n+1},Y_{n+1})$ are exchangeable under the distribution $\tilde{P}(\Lcal)$.
 Therefore for any fixed $\Lcal$ and for any set $\Xcal\in\Xset$ with $\Pp{\tilde{P}(\Lcal)}{X_{n+1}\in\Xcal}\geq \delta$,
 the $(1-\alpha,\delta,\Xset)$-CCE property ensures that
\[ \Ppst{\tilde{P}(\Lcal)}{Y_{n+1}\in\Ch_n(X_{n+1})}{X_{n+1}\in\Xcal}\geq 1-\alpha.\]

Now, fixing $\Lcal$, define the set $\Xcal(\Lcal)$ to be any element of $\Xset$ such that
\[\Xcal(\Lcal) \ni X_1^{(1)},\dots,X_1^{(n+1)}, \quad \Xcal(\Lcal) \not\ni X_0^{(1)},\dots,X_0^{(n+1)}.\]
(Since we have assumed that $\vcae(\Xset)\geq 2n+2$, 
 and that $P_X$ is continuous with respect to Lebesgue measure, such a set $\Xcal(\Lcal)\in\Xset$ exists
with probability one for any random draw of $\Lcal$.)
Note that, under the distribution $\tilde{P}(\Lcal)$, we have $X_{n+1}\in\Xcal(\Lcal)$ if and only if
$(X_{n+1},Y_{n+1})\in\Bcal$, and
\[\Pp{\tilde{P}(\Lcal)}{(X_{n+1},Y_{n+1})\in\Bcal} = \Pp{\tilde{P}(\Lcal)}{X_{n+1}\in\Xcal(\Lcal)} = \PP{B_{n+1}=1} = \delta'\geq \delta.\]
Returning to the above, we therefore have
\begin{multline*} \Pp{\tilde{P}(\Lcal)}{Y_{n+1}\in\Ch_n(X_{n+1}),(X_{n+1},Y_{n+1})\in\Bcal}\\
= \Pp{\tilde{P}(\Lcal)}{Y_{n+1}\in\Ch_n(X_{n+1}),X_{n+1}\in\Xcal(\Lcal)}\\ = \Ppst{\tilde{P}(\Lcal)}{Y_{n+1}\in\Ch_n(X_{n+1})}{X_{n+1}\in\Xcal(\Lcal)} \cdot \delta' \geq (1-\alpha)\cdot\delta'.\end{multline*}
Then, returning to~\eqref{eqn:Lcal_tower}, 
\begin{multline*}\PP{Y_{n+1}\in\Ch_n(X_{n+1}),(X_{n+1},Y_{n+1})\in\Bcal} \\
=\EE{\PPst{Y_{n+1}\in\Ch_n(X_{n+1}),(X_{n+1},Y_{n+1})\in\Bcal}{\Lcal}}\\
=\EE{\Pp{\tilde{P}(\Lcal)}{Y_{n+1}\in\Ch_n(X_{n+1}),(X_{n+1},Y_{n+1})\in\Bcal}}\\ \geq \EE{(1-\alpha)\cdot \delta'}=(1-\alpha)\cdot \delta'.
\end{multline*}
Therefore,
\[\PPst{Y_{n+1}\in\Ch_n(X_{n+1})}{(X_{n+1},Y_{n+1})\in\Bcal} \geq \frac{(1-\alpha)\cdot \delta'}{\delta'} = 1-\alpha,\]
which proves the lemma.
\end{proof}

\subsection{Proof of Theorem~\ref{thm:Xset_VC_upperbd}}
Let $\mu = \muh_{n_0}$. Throughout this proof, we will condition on the data $(X_1,Y_1),\dots,(X_{n_0},Y_{n_0})$,
and will therefore treat this model
as fixed---the probability bound will hold with respect to the distribution of the $n_1$ holdout points (and therefore, the bound also 
holds after marginalizing over the initial $n_0$ training points).

We will first see that it is sufficient to prove that, with high probability, the following two bounds hold:
\begin{equation}\label{eqn:concentration_Xset}\Xset_{x,+}\subseteq \widehat{\Xset}_{n_1}\subseteq \Xset_{x,-},\end{equation}where
we define $\Xset_{x,+} = \{\Xcal\in\Xset:x\in\Xcal, \, P_X(\Xcal)\geq \delta_+\}$ and $\Xset_{x,-} = \{\Xcal\in\Xset:x\in\Xcal, \, P_X(\Xcal)\geq \delta_-\}$,
and
\begin{equation}\label{eqn:concentration_quantile}q^*_{P,\mu,\alpha_+}(\Xcal) \leq \qh_{n_1}(\Xcal) \leq q^*_{P,\mu,\alpha_-}(\Xcal)\textnormal{\quad for all $\Xcal\in\Xset_{x,-}$}.\end{equation}
If these two statements hold, then we have
\[q^*_{P,\mu,\alpha_+,\delta_+}(x) = \sup_{\Xcal\in\Xset_{x,+}}q^*_{P,\mu,\alpha_+}(\Xcal)\leq \sup_{\Xcal\in\widehat\Xset_{n_1}}q^*_{P,\mu,\alpha_+}(\Xcal)
\leq \sup_{\Xcal\in\widehat\Xset_{n_1}}\qh_{n_1}(\Xcal) = \qh_{n_1}(x),\]
and similarly
\[q^*_{P,\mu,\alpha_-,\delta_-}(x) = \sup_{\Xcal\in\Xset_{x,-}}q^*_{P,\mu,\alpha_-}(\Xcal)\geq \sup_{\Xcal\in\widehat\Xset_{n_1}}q^*_{P,\mu,\alpha_-}(\Xcal)
\geq \sup_{\Xcal\in\widehat\Xset_{n_1}}\qh_{n_1}(\Xcal) = \qh_{n_1}(x).\]By construction of the intervals, we therefore
see that
$C_{P,\mu,\alpha_+,\delta_+}^*(x)\subseteq \Ch_n(x)\subseteq  C_{P,\mu,\alpha_-,\delta_-}^*(x)$, which is the claim
in the theorem.

Now we verify that~\eqref{eqn:concentration_Xset} and~\eqref{eqn:concentration_quantile} both hold with high probability.
First, by \citet[Section 2.3 (Bousquet bound) + Theorem 3.9]{koltchinskii2011oracle},
we can verify the following concentration result:\footnote{To obtain this bound, we need to apply \citet[Theorems 2.5+3.9]{koltchinskii2011oracle} $\O{\log(n_1)}$ many times, once
for each class $\Xset_j = \{\Xcal\in\Xset: P_X(\Xcal)\leq 2^{-j}\}$, for $j=1,\dots,\O{\log(n_1)}$ (i.e., a peeling argument).}
\begin{equation}\label{eqn:concentration_1}\PP{\left|\frac{\widehat{N}_{n_1}(\Xcal)}{n_1} -P_X(\Xcal)\right| \leq \Delta_{\textnormal{conc}}(\Xcal)\textnormal{ for all $\Xcal\in\Xset$}} \geq 1- \frac{1}{3n_1},\end{equation}
where 
\[\Delta_{\textnormal{conc}}(\Xcal) =  c\sqrt{P_X(\Xcal)}\cdot \sqrt{\frac{\vc(\Xset)\log^2(n_1)}{n_1}} + \frac{c\log(n_1)}{n_1},\]
for a universal constant $c$.

Next, for any $\Xcal\in\Xset$, define
\[\tilde\Xcal= \left\{(x,y)\in\R^d\times\R : x\in\Xcal\textnormal{ and }|y - \mu(x)|> q^*_{P,\mu,\alpha_-}(\Xcal)\right\}.\]
Lemma~\ref{lem:vc} below will verify that
\[\vc\Big(\{\tilde\Xcal : \Xcal\in\Xset\}\Big)\leq \vc(\Xset)+1.\]
Therefore, again applying 
\citet[Bousquet bound (Section 2.3) + Theorem 3.9]{koltchinskii2011oracle} as above, if the universal constant $c$ is chosen appropriately then it holds that
\begin{multline*}
\mathbb{P}\Bigg\{\left|\frac{1}{n_1}\sum_{i=n_0+1}^n \One{(X_i,Y_i)\in\tilde\Xcal} - \Pp{P}{(X,Y)\in\tilde\Xcal}\right|\\ \leq\Delta_{\textnormal{conc}}(\Xcal)
\quad\forall\ \Xcal\in\Xset_{x,-}\Bigg\}\geq 1 - \frac{1}{3n_1}.
\end{multline*}
Plugging in the definition of $\tilde\Xcal$ and of $q^*_{P,\mu,\alpha_-}(\Xcal)$, this means that
 \begin{multline}\label{eqn:concentration_2}
\mathbb{P}\Bigg\{\frac{1}{n_1}\sum_{i=n_0+1}^n \One{X_i\in\Xcal, \ |Y_i - \mu(X_i)|>q^*_{P,\mu,\alpha_-}(\Xcal)}\\ \leq  \alpha_-P_X(\Xcal) + \Delta_{\textnormal{conc}}(\Xcal)
\quad\forall\ \Xcal\in\Xset_{x,-}\Bigg\}\geq 1 - \frac{1}{3n_1}.
\end{multline}
An analogous argument can be used to prove that
 \begin{multline}\label{eqn:concentration_3}
\mathbb{P}\Bigg\{\frac{1}{n_1}\sum_{i=n_0+1}^n \One{X_i\in\Xcal, \ |Y_i - \mu(X_i)|\geq q^*_{P,\mu,\alpha_+}(\Xcal)}\\ \geq  \alpha_+P_X(\Xcal) - \Delta_{\textnormal{conc}}(\Xcal)
\quad\forall\ \Xcal\in\Xset_{x,-}\Bigg\}\geq 1 - \frac{1}{3n_1}.
\end{multline}

Now from this point on, we will assume that the events in~\eqref{eqn:concentration_1},~\eqref{eqn:concentration_2},
 and~\eqref{eqn:concentration_3} all hold, which will occur with probability at least $1-\frac{1}{n_1}$.
We now need to verify that this implies~\eqref{eqn:concentration_Xset} and~\eqref{eqn:concentration_quantile}.

First we verify~\eqref{eqn:concentration_Xset}.
For any $\Xcal\in\widehat{\Xset}_{n_1}$, by definition of $\widehat{\Xset}_{n_1}$ we have
\[\delta \left(1  - \sqrt{\frac{2\log(n_1)}{\delta n_1}}\right) \leq \frac{1}{n_1} \sum_{i=n_0+1}^n \One{X_i\in\Xcal} \\\leq  P_X(\Xcal) +\Delta_{\textnormal{conc}}(\Xcal). \]
Examining the definition of $\delta_-$, we see that this implies $P_X(\Xcal)\geq \delta_-$ when the universal constant $c_\delta$ is chosen appropriately. 
This proves that $\widehat{\Xset}_{n_1}\subseteq \Xset_{x,-}$. Conversely, for any $\Xcal\in\Xset_{x,+}$,
again assuming $c_\delta$ is chosen appropriately, we have
\begin{multline*}
\frac{1}{n_1}  \sum_{i=n_0+1}^n \One{X_i\in\Xcal} \geq  P_X(\Xcal) -\Delta_{\textnormal{conc}}(\Xcal)  \\
 \geq \delta_+ - c\sqrt{\frac{\vc(\Xset)\log^2(n_1)}{n_1}} -\frac{c\log(n_1)}{n_1} \geq \delta\left(1  - \sqrt{\frac{2\log(n_1)}{\delta n_1}}\right) ,
  \end{multline*}and so $\Xcal\in\widehat{\Xset}_{n_1}$. This proves that $\widehat{\Xset}_{n_1}\supseteq \Xset_{x,+}$,
  and therefore,~\eqref{eqn:concentration_Xset} holds whenever the event in~\eqref{eqn:concentration_1} occurs.

 Next we verify~\eqref{eqn:concentration_quantile}. Fix
any $\Xcal\in\Xset_{x,-}$.
By the events in~\eqref{eqn:concentration_1} and~\eqref{eqn:concentration_2}, we have
\begin{multline*}
\sum_{i=n_0+1}^n \One{X_i\in\Xcal, \ |Y_i - \mu(X_i)|>q^*_{P,\mu,\alpha_-}(\Xcal)}\\
\leq n_1\alpha_-P_X(\Xcal) +n_1 \Delta_{\textnormal{conc}}(\Xcal)
\leq n_1\alpha_-\left(\frac{\widehat{N}_{n_1}(\Xcal)}{n_1} + \Delta_{\textnormal{conc}}(\Xcal)\right)+n_1 \Delta_{\textnormal{conc}}(\Xcal)\\
\leq\widehat{N}_{n_1}(\Xcal) \left(\alpha_- +  \frac{2 \Delta_{\textnormal{conc}}(\Xcal) }{\frac{1}{n_1}\widehat{N}_{n_1}(\Xcal)}\right)
\leq\widehat{N}_{n_1}(\Xcal) \left(\alpha_- + \frac{2 \Delta_{\textnormal{conc}}(\Xcal) }{P_X(\Xcal) - \Delta_{\textnormal{conc}}(\Xcal) }\right).
\end{multline*}
Furthermore, by definition of $\alpha_-$, it holds that
\[ \widehat{N}_{n_1}(\Xcal) \left(\alpha_- + \frac{2 \Delta_{\textnormal{conc}}(\Xcal) }{P_X(\Xcal) - \Delta_{\textnormal{conc}}(\Xcal) }\right)
\leq  \widehat{N}_{n_1}(\Xcal)  -\left\lceil\left(1-\alpha+\frac{1}{n_1}\right)\cdot \big(\widehat{N}_{n_1}(\Xcal)+1\big)\right\rceil\]
as long as the constants $c_\alpha,c_\delta$ are chosen appropriately. Combining these calculations, we see that
\[\sum_{i=n_0+1}^n \One{X_i\in\Xcal, \ |Y_i - \mu(X_i)| \leq q^*_{P,\mu,\alpha_-}(\Xcal)}
\geq \left\lceil\left(1-\alpha+\frac{1}{n_1}\right)\cdot \big(\widehat{N}_{n_1}(\Xcal)+1\big)\right\rceil.\]
Since $\qh_{n_1}(\Xcal)$ is defined as
 the $\lceil\left(1-\alpha+\frac{1}{n_1}\right)\cdot (\widehat{N}_{n_1}(\Xcal)+1)\rceil$-th smallest value  in the list $\big\{R_i:n_0+1\leq i\leq n, X_i\in\Xcal\big\}$,
the above bound immediately verifies that
\[\qh_{n_1}(\Xcal) \leq q^*_{P,\mu,\alpha_-}(\Xcal).\]

We can similarly show that, if the events in~\eqref{eqn:concentration_1} and~\eqref{eqn:concentration_3} both hold, then
\begin{multline*}
\sum_{i=n_0+1}^n \One{X_i\in\Xcal, \ |Y_i - \mu(X_i)|\geq q^*_{P,\mu,\alpha_+}(\Xcal)}\\
\geq \widehat{N}_{n_1}(\Xcal) \left(\alpha_+ - \frac{2 \Delta_{\textnormal{conc}}(\Xcal) }{P_X(\Xcal) - \Delta_{\textnormal{conc}}(\Xcal) }\right)> \widehat{N}_{n_1}(\Xcal)\cdot \alpha,\end{multline*}
and by definition of $\qh_{n_1}(\Xcal)$ this is sufficient to establish that
\[\qh_{n_1}(\Xcal) \geq q^*_{P,\mu,\alpha_+}(\Xcal).\]

Therefore, combining everything, we have shown that~\eqref{eqn:concentration_Xset}  and ~\eqref{eqn:concentration_quantile} both hold
whenever the events in~\eqref{eqn:concentration_1},~\eqref{eqn:concentration_2}, and~\eqref{eqn:concentration_3} all hold, 
which occurs 
with probability at least $1-\frac{1}{n_1}$. This completes the proof of the theorem.
\subsubsection{Supporting lemma}
\begin{lemma}\label{lem:vc}
Let $\Xset$ be any collection of measurable subsets of $\R^d$, and let $c:\Xset\rightarrow\R$ be any function.
Fix any function $f:\R^d\times\R\rightarrow\R$, and for each $\Xcal\in\Xset$ define
\[\tilde\Xcal= \left\{(x,y)\in\R^d\times\R : x\in\Xcal\textnormal{ and }f(x,y)>c(\Xcal)\right\}.\]
Then
\[\vc\Big(\{\tilde\Xcal : \Xcal\in\Xset\}\Big)\leq \vc(\Xset)+1.\]
\end{lemma}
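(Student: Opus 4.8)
The plan is a direct, self-contained shattering argument; I deliberately avoid the generic bound on the VC dimension of an intersection of two classes, since that would introduce a spurious logarithmic factor and only give $\vc(\{\tilde\Xcal:\Xcal\in\Xset\})=\O{\vc(\Xset)\log\vc(\Xset)}$, whereas we want the clean bound $\vc(\{\tilde\Xcal:\Xcal\in\Xset\})\le\vc(\Xset)+1$. Write $m=\vc\big(\{\tilde\Xcal:\Xcal\in\Xset\}\big)$. If $m\le 1$ the bound is immediate, so assume $m\ge 2$ and fix a set $\{(x_1,y_1),\dots,(x_m,y_m)\}$ that is shattered by $\{\tilde\Xcal:\Xcal\in\Xset\}$. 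I would first reindex these points so that $(x_1,y_1)$ attains the smallest value of $f$ among them, i.e.\ $f(x_1,y_1)\le f(x_i,y_i)$ for all $i$ (in case of ties, take any minimizer to be the first point).

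The crux is the following observation: whenever $\Xcal\in\Xset$ is such that $(x_1,y_1)\in\tilde\Xcal$, the threshold is pinned by $c(\Xcal)<f(x_1,y_1)$, hence $c(\Xcal)<f(x_i,y_i)$ for \emph{every} $i$ by minimality; so on the whole set $\{(x_1,y_1),\dots,(x_m,y_m)\}$ the defining condition ``$f(x,y)>c(\Xcal)$'' is automatically satisfied, and membership of $(x_i,y_i)$ in $\tilde\Xcal$ collapses to membership of $x_i$ in $\Xcal$. This is exactly why the right point to ``sacrifice'' is the minimal-$f$ point, forced \emph{inside} $\tilde\Xcal$: doing so neutralizes the threshold for all points simultaneously, and what remains is pure membership in $\Xcal$.

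With this in hand, I would show that $\{x_2,\dots,x_m\}$ is shattered by $\Xset$. Given an arbitrary $Q\subseteq\{2,\dots,m\}$, apply the shattering hypothesis to the subset $\{(x_i,y_i):i\in Q\cup\{1\}\}$ to obtain $\Xcal\in\Xset$ with $\tilde\Xcal\cap\{(x_1,y_1),\dots,(x_m,y_m)\}=\{(x_i,y_i):i\in Q\cup\{1\}\}$. Since $(x_1,y_1)\in\tilde\Xcal$, the observation above gives $(x_i,y_i)\in\tilde\Xcal\iff x_i\in\Xcal$ for every $i$, so $\Xcal\cap\{x_1,\dots,x_m\}=\{x_i:i\in Q\cup\{1\}\}$ and therefore $\Xcal\cap\{x_2,\dots,x_m\}=\{x_i:i\in Q\}$. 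Letting $Q$ range over all subsets of $\{2,\dots,m\}$ exhibits $\{x_2,\dots,x_m\}$ as a shattered set; taking $Q$ to be singletons along the way also forces $x_2,\dots,x_m$ to be distinct, so this is a genuine $(m-1)$-point set. Hence $\vc(\Xset)\ge m-1$, which is the claim.

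I do not anticipate a real obstacle: the argument uses nothing about $f$ or $c$ beyond their being well-defined, and no measure-theoretic input is needed (unlike $\vcae$). The only thing to get exactly right is the choice of which point to drop — the minimal-$f$ point, and crucially it must be placed \emph{inside} $\tilde\Xcal$ in each realized pattern. A related minor remark, which can be stated separately but also drops out of the construction, is that any set shattered by $\{\tilde\Xcal:\Xcal\in\Xset\}$ must have pairwise distinct $x$-coordinates (two points with the same $x$ can only be separated by the $f$-threshold, which cannot place the larger $f$-value outside while keeping the smaller one inside).
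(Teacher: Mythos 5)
Your proof is correct and follows essentially the same argument as the paper's: drop the point with minimal $f$-value, force it inside $\tilde\Xcal$ in every realized pattern (so the threshold condition becomes vacuous on the remaining points), and conclude that $\Xset$ shatters the remaining $m-1$ first coordinates. The only differences are cosmetic (you index the minimizer as $(x_1,y_1)$ while the paper uses $(x_m,y_m)$), plus your added remark that the $x_i$'s must be pairwise distinct, a point the paper leaves implicit.
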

\begin{proof}
To see this, suppose $\vc(\{\tilde\Xcal : \Xcal\in\Xset\})=m$. If $m=1$ then the result is trivial, so assume $m\geq 2$.
We can then find $m$ points $(x_i,y_i)\in\R^d\times\R$, for $i=1,\dots,m$, which are shattered by $\{\tilde\Xcal : \Xcal\in\Xset\}$.
Without loss of generality assume that $f(x_m,y_m)=\min_{i=1,\dots,m}f(x_i,y_i)$. We will now show that the set $\{x_1,\dots,x_{m-1}\}$
is shattered by $\Xset$. Fix any subset $I\subseteq\{1,\dots,m-1\}$, and let $\tilde I = I \cup\{m\}$.
Then since $\{\tilde\Xcal : \Xcal\in\Xset\}$ shatters $(x_1,y_1),\dots,(x_m,y_m)$, there must be some $\Xcal\in\Xset$ such
that $(x_i,y_i)\in\tilde\Xcal$ for $i\in\tilde I$ and $(x_i,y_i)\not\in\tilde\Xcal$ for $i\not\in\tilde I$. In particular, taking $i=m\in\tilde I$, we have
\[(x_m,y_m)\in\tilde\Xcal \quad \Rightarrow \quad f(x_m,y_m)>c(\Xcal) \quad\Rightarrow\quad f(x_i,y_i)>c(\Xcal)\textnormal{ for all $i$}.\]
Now, for all $i\in I$, 
\[ i \in\tilde I \quad \Rightarrow \quad (x_i,y_i)\in\tilde\Xcal  \quad \Rightarrow \quad x_i\in\Xcal,\]
and for all $i\in\{1,\dots,m-1\}\backslash I$, we know that $ f(x_i,y_i)>c(\Xcal)$ and therefore
\[i\not\in\tilde I \quad \Rightarrow \quad x_i \not\in\Xcal.\]
Since we can find such a set $\Xcal$ for each subset $I\subseteq\{1,\dots,m-1\}$, this means that $\Xset$ shatters $\{x_1,\dots,x_{m-1}\}$,
and therefore $\vc(\Xset)\geq m-1$, completing the proof.
\end{proof}
 
\subsection{Proof of Corollary~\ref{cor:Xset_VC_upperbd}}
Recall that the oracle interval is given by
\[C^*_P(X_{n+1}) = \mu_P(X_{n+1})\pm  q^*_{\eps,\alpha}\]
where $q^*_{\eps,\alpha}$ is the $(1-\alpha/2)$-quantile of $ f_\eps$.
By Theorem~\ref{thm:Xset_VC_upperbd}, for every $x\in\R^d$ we have
\[\PP{C_{P,\muh_{n_0},\alpha_+,\delta_+}^*(x)\subseteq \Ch_n(x)\subseteq  C_{P,\muh_{n_0},\alpha_-,\delta_-}^*(x)} \geq 1 - \frac{1}{n_1},\]
where $\alpha_+,\alpha_-,\delta_+,\delta_-$ are defined as in the statement of that theorem. Therefore, it must also hold that
\[\PP{C_{P,\muh_{n_0},\alpha_+,\delta_+}^*(X_{n+1})\subseteq \Ch_n(X_{n+1})\subseteq  C_{P,\muh_{n_0},\alpha_-,\delta_-}^*(X_{n+1})} \geq 1 - \frac{1}{n_1},\]
and so with probability at least $1-\frac{1}{n_1}$, we have
\begin{multline*}\leb\big(\Ch_n(X_{n+1})\,\triangle\,C_P^*(X_{n+1})\big) \leq \\ \leb\big(C_{P,\muh_{n_0},\alpha_-,\delta_-}^*(X_{n+1})\backslash C^*_P(X_{n+1})\big)
+\leb\big( C^*_P(X_{n+1})\backslash C_{P,\muh_{n_0},\alpha_+,\delta_+}^*(X_{n+1})\big).
\end{multline*}
Now we bound these two terms.
We can calculate deterministically that
\begin{multline*} \leb\big(C_{P,\muh_{n_0},\alpha_-,\delta_-}^*(X_{n+1})\backslash C^*_P(X_{n+1})\big) \leq\\ |\muh_{n_0}(X_{n+1}) - \mu_P(X_{n+1})| + 2\max\big\{q^*_{P,\muh_{n_0},\alpha_-,\delta_-}(X_{n+1}) - q^*_{\eps,\alpha},0\big\}\end{multline*}
and
\begin{multline*}\leb\big( C^*_P(X_{n+1})\backslash C_{P,\muh_{n_0},\alpha_+,\delta_+}^*(X_{n+1})\big) \leq\\|\muh_{n_0}(X_{n+1}) - \mu_P(X_{n+1})| + 2\max\big\{q^*_{\eps,\alpha} - q^*_{P,\muh_{n_0},\alpha_+,\delta_+}(X_{n+1}),0\big\}.\end{multline*}
Therefore, with probability at least $1-\frac{1}{n_1}$, we have
\begin{multline*}\leb\big(\Ch_n(X_{n+1})\,\triangle\,C_P^*(X_{n+1})\big) \leq 2|\muh_{n_0}(X_{n+1}) - \mu_P(X_{n+1})|\\ 
{}+2\max\big\{q^*_{\eps,\alpha} - q^*_{P,\muh_{n_0},\alpha_+,\delta_+}(X_{n+1}),0\big\}+ 2\max\big\{q^*_{P,\muh_{n_0},\alpha_-,\delta_-}(X_{n+1}) - q^*_{\eps,\alpha},0\big\},
\end{multline*}
so we now need to bound these remaining terms with high probability.

First we bound $|\muh_{n_0}(X_{n+1}) - \mu_P(X_{n+1})|$. Define
\[\widehat\Delta_{n_0} = \EEst{\left(\muh_{n_0}(X) - \mu_P(X)\right)^2}{\muh_{n_0}},\]
which satisfies $\PP{\widehat\Delta_{n_0}\leq \eta_{n_0}}\geq 1-\rho_{n_0}$ by~\eqref{eqn:muh_consistent}.
We have
\begin{multline*}\PP{|\muh_{n_0}(X_{n+1}) - \mu_P(X_{n+1})|> \eta_{n_0}^{1/3}} = \EE{\PPst{|\muh_{n_0}(X_{n+1}) - \mu_P(X_{n+1})|>\eta_{n_0}^{1/3}}{\muh_{n_0}}} \\
\leq \EE{\min\left\{\frac{\EEst{(\muh_{n_0}(X_{n+1}) - \mu_P(X_{n+1}))^2}{\muh_{n_0}}}{\eta_{n_0}^{2/3}},1\right\}}
= \EE{\min\left\{\frac{\widehat\Delta_{n_0}}{\eta_{n_0}^{2/3}},1\right\}} \leq \rho_{n_0} + \frac{\eta_{n_0}}{\eta_{n_0}^{2/3}}.
\end{multline*}
Therefore, with probability at least $1-\frac{1}{n_1} - \rho_{n_0} - \eta_{n_0}^{1/3}$, we have
\begin{multline*}\leb\big(\Ch_n(X_{n+1})\,\triangle\,C_P^*(X_{n+1})\big) \leq 2\eta_{n_0}^{1/3}\\ 
{}+2\max\big\{q^*_{\eps,\alpha} - q^*_{P,\muh_{n_0},\alpha_+,\delta_+}(X_{n+1}),0\big\}+ 2\max\big\{q^*_{P,\muh_{n_0},\alpha_-,\delta_-}(X_{n+1}) - q^*_{\eps,\alpha},0\big\}.
\end{multline*}
Next, by definition we have
\[q^*_{P,\muh_{n_0},\alpha_+,\delta_+}(X_{n+1}) = \sup_{\Xcal\in\Xset : X_{n+1}\in\Xcal, P_X(\Xcal)\geq\delta_+} q^*_{P,\muh_{n_0},\alpha_+}(\Xcal)
\geq q^*_{P,\muh_{n_0},\alpha_+}(\R^d) \geq q^*_{\eps,\alpha_+},\]
where the last step uses  the location
family assumption~\eqref{eqn:loc_family}. Therefore, with probability at least $1-\frac{1}{n_1} - \rho_{n_0} - \eta_{n_0}^{1/3}$, we have
\begin{multline*}\leb\big(\Ch_n(X_{n+1})\,\triangle\,C_P^*(X_{n+1})\big) \leq 2\eta_{n_0}^{1/3} + 2\big(q^*_{\eps,\alpha} - q^*_{\eps,\alpha_+}\big)\\ 
{}+ 2\max\big\{q^*_{P,\muh_{n_0},\alpha_-,\delta_-}(X_{n+1}) - q^*_{\eps,\alpha},0\big\}.
\end{multline*}
We now address the last term.
By definition we have
\[q^*_{P,\muh_{n_0},\alpha_-,\delta_-}(X_{n+1}) = \sup_{\Xcal\in\Xset : X_{n+1}\in\Xcal, P_X(\Xcal)\geq\delta_-} q^*_{P,\muh_{n_0},\alpha_-}(\Xcal)
\leq \sup_{\Xcal\in\Xset : P_X(\Xcal)\geq\delta_-} q^*_{P,\muh_{n_0},\alpha_-}(\Xcal).\]
By the location
family assumption~\eqref{eqn:loc_family} we can see that, for any $\Xcal$,
\[q^*_{P,\muh_{n_0},\alpha_-}(\Xcal)\leq \min_{0< \alpha' < \alpha_-}\left\{q^*_{\eps,\alpha_--\alpha'} + \textnormal{\begin{tabular}{c}the $(1-\alpha')$-quantile
of $|\muh_{n_0}(X) - \mu_P(X)|$ \\conditional on $\muh_{n_0}$ and on $X\in\Xcal$\end{tabular}}\right\}.\]
And, for any $\Xcal$ with $P_X(\Xcal)\geq\delta_-$, this last quantile is bounded by
\[\sqrt{\frac{\EEst{(\muh_{n_0}(X) - \mu_P(X))^2}{\muh_{n_0},X\in\Xcal}}{\alpha'}}\leq \sqrt{\frac{\widehat\Delta_{n_0}}{\alpha'\delta_-}}.\]
Therefore, choosing $\alpha' = \eta_{n_0}^{1/3}$,
\[q^*_{P,\muh_{n_0},\alpha_-,\delta_-}(X_{n+1})  \leq q^*_{\eps,\alpha_--\eta_{n_0}^{1/3}} +  \sqrt{\frac{\widehat\Delta_{n_0}}{\eta_{n_0}^{1/3}\delta_-}}\leq q^*_{\eps,\alpha_--\eta_{n_0}^{1/3}} +  \eta_{n_0}^{1/3}\delta_-^{-1/2}\,\]
where the last bound holds with probability at least $1-\rho_{n_0}$ by~\eqref{eqn:muh_consistent}.
Combining everything, 
 with probability at least $1-\frac{1}{n_1} - 2\rho_{n_0} - \eta_{n_0}^{1/3}$, we have
\[\leb\big(\Ch_n(X_{n+1})\,\triangle\,C_P^*(X_{n+1})\big) \leq 2\eta_{n_0}^{1/3} + 2\big(q^*_{\eps,\alpha_--\eta_{n_0}^{1/3}} - q^*_{\eps,\alpha_+}\big)
+ 2 \eta_{n_0}^{1/3}\delta_-^{-1/2}.\]
Finally, by our assumptions~\eqref{eqn:loc_family} on the density $f_\eps$ and the definition of $q^*_{\eps,\cdot}$, for any $\alpha'< \alpha''\in[0,1]$ we have
\[\frac{1}{2}(\alpha''-\alpha') = \int_{t=q^*_{\eps,\alpha''}}^{q^*_{\eps,\alpha'}}f_\eps(t)\;\mathsf{d}t \geq f_\eps(q^*_{\eps,\alpha'})\cdot \big(q^*_{\eps,\alpha'} - q^*_{\eps,\alpha''}\big).\]
Therefore,
\[q^*_{\eps,\alpha_--\eta_{n_0}^{1/3}} - q^*_{\eps,\alpha_+} \leq \frac{\alpha_+ - \big(\alpha_- - \eta_{n_0}^{1/3} \big)}{2f_\eps(q^*_{\eps,\alpha_--\eta_{n_0}^{1/3}} )},\]
which completes the proof for constants $c,c'$ chosen appropriately.

\bibliographystyle{plainnat}
\bibliography{bib}
\end{document}